\documentclass[a4paper,10pt,reqno]{amsart}

\usepackage{amsmath,amsthm,amsfonts,amssymb,amsopn,color}

\usepackage[pagewise]{lineno}
\usepackage[all,arc]{xy}
\usepackage{esint,enumerate}
\usepackage{mathrsfs}
\usepackage{amsthm}
\usepackage{lipsum}
\usepackage{pgf,tikz}
\usetikzlibrary{arrows}
\usepackage{graphicx,mfpic}
\usepackage{subfigure}
\usepackage[utf8]{inputenc}
\usepackage{hyperref}

\usepackage[utf8]{inputenc}
\usepackage[english]{babel}


\usepackage{etoolbox}
\patchcmd{\section}{\scshape}{\Large\bfseries}{}{}
\makeatletter
\renewcommand{\@secnumfont}{\bfseries}
\makeatother

\newtheorem{theorem}{Theorem}[section]
\newtheorem{lemma}[theorem]{Lemma}

\newtheorem{corollary}[theorem]{Corollary}

\newtheorem*{thma}{Theorem A}
\newtheorem*{thmb}{Theorem B}
\newtheorem*{thmc}{Theorem C}

\theoremstyle{definition}

\newtheorem{remark}[theorem]{Remark}

\newcommand{\rone}{\mathbb{R}}
\newcommand{\rtwo}{{\mathbb R^2}}

\newcommand{\cpx}{\mathbb C}



\newcommand{\calX}{{\mathcal X}}
\newcommand{\calY}{{\mathcal Y}}


\newcommand{\tilu}{{\widetilde u}}
\newcommand{\tilv}{{\widetilde v}}










\DeclareMathOperator{\argmin}{argmin}

\newcommand{\al}{{\alpha}}

\newcommand{\ga}{{\gamma}}

\newcommand{\ve}{{\varepsilon}}
\newcommand{\vp}{{\varphi}}

\newcommand{\Om}{{\Omega}}


\newcommand{\pa}{\partial}

\newcommand{\qand}{{\quad \mbox{and} \quad}}

\newcommand{\qfor}{{\quad \mbox{for} \quad}}
\newcommand{\qie}{{\quad \mbox{i.e.}, \quad}}

\newcommand{\qin}{{\quad \mbox{in} \quad}}
\newcommand{\qon}{{\quad \mbox{on} \quad}}

\newcommand{\qwhere}{{\quad \mbox{where} \quad}}
\newcommand{\qwith}{{\quad \mbox{with} \quad}}



\makeatletter
\makeatother
\numberwithin{equation}{section}

\bibliographystyle{plain}

\title[On the Convergence of Solutions for the Ginzburg-Landau Equation and System]{On the Convergence of Solutions for the Ginzburg-Landau Equation and System}

\author[R. Hadiji]{Rejeb Hadiji}\thanks{Univ Paris Est Creteil, CNRS, LAMA, F-94010 Creteil, France.
Univ Gustave Eiffel, LAMA, F-77447 Marne-la-Vallee, France.
 email: rejeb.hadiji@u-pec.fr}

 \author[J. Han]{Jongmin Han}\thanks{Department of Mathematics, Kyung Hee University,
 Seoul, 130-701, Korea.
 email:  jmhan@khu.ac.kr}

 \subjclass[2000]{35B40, 35J60, 35Q60.}

\keywords{two component Ginzburg-Landau equations, non-symmetric potential, asymptotic behavior of solutions}

\begin{document}

\begin{abstract}
Let $(u_\varepsilon)$ be a family of solutions of the Ginzburg--Landau equation 
with boundary condition $u_\varepsilon = g$ on $\partial \Omega$ and of degree $0$. 
Let $u_0$ denote the harmonic map satisfying $u_0 = g$ on $\partial \Omega$. 
We show that, if there exists a constant $C_1 > 0$ such that  for $\varepsilon$ sufficiently small we have
$\frac{1}{2} \int_\Omega |\nabla u_\ve|^2 dx \leq C_1 \leq
\frac{1}{2} \int_\Omega |\nabla u_0|^2 dx,$
then
$C_1 = \frac{1}{2} \int_\Omega |\nabla u_0|^2 dx$
and
 $u_\ve  ~\to ~   u_0 \qin H^1(\Om)$.
 We also prove that if  there is a constant $C_2$ such that  for $\ve$ small enough we have
$ \frac12 \int_\Om |\nabla u_\ve|^2 dx \geq C_2 > \frac12 \int_\Om |\nabla u_0|^2 dx,$
then
$|u_{\ve}|$ does not converge uniformly to $1$ on    $\overline{\Om} $.
We   obtain  analogous results for both symmetric and non-symmetric two-component  Ginzburg--Landau systems.
\end{abstract}
\maketitle


\normalsize

\section{Introduction}

Let $\Om \subset \rtwo$ be a smooth bounded     domain.
Let
\[ g  : \pa \Om \to S^1=\{ z \in \cpx : |z|=1\}
\]
be a smooth map that has a nonnegative integer-valued  degree $  \deg (g,\pa\Om)=d$.
Let us define
\[ H^1_g (\Om)  = \big\{ u \in   H^1(\Om; \cpx) : u=g  \mbox{ on } \pa \Om \big\}.
\]
For $\ve>0$, we consider the  Ginzburg--Landau   energy functional
\begin{equation}
\label{eq:ftnal Eb}
G_{\ve}  (u ) = \frac{1}{2} \int_\Om |\nabla u|^2  dx + \frac{1}{4\ve^2} \int_\Om (1-|u|^2)^2 dx.
\end{equation}
The Euler-Lagrange equations for $G_{\ve}$ are  the Ginzburg--Landau equations
\begin{equation}
\label{eq:GL}
\left\{
\begin{aligned}
-\Delta u& = \frac{1}{\ve^2} u (1-|u|^2 ) \qin \Omega, \\
u&=g \qquad  \qquad \quad \qon \pa\Omega.
\end{aligned}
\right.
\end{equation}
In \cite{BBH93,BBH94}, Bethuel, Brezis and H\'{e}lein studied the convergence of minimizers.
In particular, if $\deg (g, \pa \Om)=0$, they proved the following:

\begin{thma}{\rm \cite{BBH93}}\label{thma:BBH}
Let $u_{\ve}$ be a minimizer of $G_{\ve}$ over $H^1_g (\Om)$ where $\Om$ is  a star-shaped domain. 
If $d=0$, then $u_{\ve} \to u_{0}$ in $C^k_{loc} (\Om)$ for any nonnegative integer $k$ as $\ve \to 0$ such that $u_0$ is a unique solution of
\begin{equation}
\label{eq:u0 ftnal}
u_0 = \underset{u \in H^1_g (\Om;S^1)}{\argmin} J_g(u) \qwhere  J_g(u) = \frac{1}{2}\int_\Om | \nabla u |^2dx .
\end{equation}
The function $u_0$ satisfies
\begin{equation}
\label{eq:u0 GL}\left\{
\begin{aligned}
-\Delta u& = u |\nabla u|^2 \quad \text{on}\quad \Om,\\
u&=g  \qquad~\qon \pa\Omega, \\
|u|&=1\qquad ~ \qon \Om.
\end{aligned}
\right.
\end{equation}
\end{thma}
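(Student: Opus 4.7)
The plan is to combine an energy comparison yielding $H^1$ convergence with the Pohozaev identity on the star-shaped domain, which will be the mechanism that upgrades the convergence to $C^k_{loc}$.

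\textbf{Step 1 (admissible competitor and energy bound).} Since $d = 0$, the boundary datum lifts as $g = e^{i\vp}$ for some $\vp \in C^\infty(\pa \Om;\rone)$. Letting $\Phi$ denote the harmonic extension of $\vp$ to $\Om$, the map $u_0 := e^{i\Phi}$ lies in $H^1_g(\Om;S^1)$, and uniqueness of the harmonic extension makes it the unique minimizer in \eqref{eq:u0 ftnal}. Since $|u_0|=1$, it is admissible for $G_\ve$, so
\[
G_\ve(u_\ve) \leq G_\ve(u_0) = \frac12 \int_\Om |\nabla u_0|^2 \, dx .
\]
This furnishes simultaneously a uniform $H^1$-bound on $u_\ve$ and the potential estimate $\int_\Om (1-|u_\ve|^2)^2 \, dx \leq C \ve^2$.

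\textbf{Step 2 (limit identification and strong $H^1$ convergence).} Passing to a subsequence, $u_\ve \rightharpoonup u_*$ weakly in $H^1$ and strongly in $L^2$; the potential estimate forces $|u_*|=1$ a.e., while the trace equals $g$, so $u_* \in H^1_g(\Om;S^1)$. Weak lower semicontinuity combined with the energy inequality gives $J_g(u_*) \leq \liminf_{\ve \to 0} J_g(u_\ve) \leq J_g(u_0)$, hence $u_* = u_0$ by uniqueness of the minimizer. The same chain forces $\|\nabla u_\ve\|_{L^2} \to \|\nabla u_0\|_{L^2}$, so weak plus norm convergence upgrades to strong $H^1$ convergence; since the limit is independent of the subsequence, the full family converges.

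\textbf{Step 3 (interior regularity via Pohozaev).} To pass from $H^1(\Om)$ to $C^k_{loc}(\Om)$ I would exploit the star-shapedness by applying the Pohozaev identity to \eqref{eq:GL}: multiplying the equation by $(x-x_0)\cdot \nabla u_\ve$ and integrating by parts on $\Om$ (star-shaped with respect to $x_0$) yields the sharpened bound
\[
\frac{1}{\ve^2} \int_\Om (1-|u_\ve|^2)^2 \, dx \leq C ,
\]
where $C$ depends only on $\Om$ and $\|g\|_{C^1}$. Combined with the maximum-principle inequality $|u_\ve| \leq 1$ and the linear elliptic inequality satisfied by $1-|u_\ve|^2$, a local energy / clearing-out argument propagates this integral smallness to the pointwise conclusion $|u_\ve| \to 1$ uniformly on every compact $K \subset \Om$. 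Once $|u_\ve| \geq c > 0$ uniformly on $K$, the factor $\ve^{-2}(1-|u_\ve|^2)$ is bounded on $K$, and standard interior elliptic bootstrap applied to \eqref{eq:GL} yields uniform $C^k(K)$ estimates, upgrading the strong $H^1$ limit to $C^k_{loc}$ convergence.

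The hardest step is the uniform interior lower bound $|u_\ve| \geq c > 0$ on compact sets, because the right-hand side of \eqref{eq:GL} is amplified by $\ve^{-2}$ and the raw potential estimate $\int_\Om (1-|u_\ve|^2)^2 \leq C \ve^2$ is by itself too weak to yield pointwise control. Star-shapedness enters essentially at this point: it is the hypothesis that makes Pohozaev available and thereby produces the $\ve$-independent $L^2$-bound on $(1-|u_\ve|^2)/\ve$ which drives the clearing-out argument.
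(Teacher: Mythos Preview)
The paper does not prove Theorem~A; it is quoted from \cite{BBH93} as background and invoked only as a black box (together with Theorem~B). So there is no proof in this paper to compare yours against.

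That said, your Step~3 contains a genuine confusion. The ``sharpened'' bound you extract from Pohozaev,
\[
\frac{1}{\ve^2}\int_\Om(1-|u_\ve|^2)^2\,dx \le C,
\]
is literally the same inequality as the potential estimate $\int_\Om(1-|u_\ve|^2)^2\,dx \le C\ve^2$ that you already obtained in Step~1 from minimality; Pohozaev adds nothing here. Its role in \cite{BBH94} and in \eqref{potential-bded} of the present paper is to supply that bound for \emph{arbitrary} solutions of \eqref{eq:GL}, where no energy comparison is available; for minimizers it is redundant. More importantly, a merely bounded potential does not force $|u_\ve|\to 1$: when $d\neq 0$ the minimizers still satisfy \eqref{potential-bded} yet vanish at the vortex cores. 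What the rescaling/clearing-out argument actually requires (this is Lemma~\ref{lem:pot to zero implies unif conv} here, citing \cite[Steps A.1, B.2]{BBH93}) is that the potential tends to \emph{zero}. You already have the ingredients: Step~1 gives
\[
\frac{1}{4\ve^2}\int_\Om(1-|u_\ve|^2)^2\,dx \le J_g(u_0) - \tfrac12\|\nabla u_\ve\|_{L^2}^2,
\]
and your Step~2 shows the right-hand side tends to $0$. Once this is observed, the polar decomposition and elliptic bootstrap proceed as in \cite{BBH93}, and star-shapedness plays no role in the degree-zero minimizer case. A secondary gap: the claim that $|u_\ve|\ge c>0$ on $K$ alone makes $\ve^{-2}(1-|u_\ve|^2)$ bounded on $K$ is not automatic; one needs the pointwise estimate $1-|u_\ve|^2 \le C\ve^2$, which in \cite{BBH93} comes from the maximum principle applied to \eqref{eq:1-u^2 eqn} after a gradient bound, not directly from the lower bound on $|u_\ve|$.
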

\medskip

See also~\cite{BBH94} for the nonzero-degree case,~\cite{HaSh06} for a potential having a zero of infinite order, and~\cite{BMR94} for the quantization effect on the whole plane.  
According to~\cite[Remark~A.1]{BBH94}, the conclusion of Theorem~A can still hold even when $u_\ve$ is not a minimizer.  
Indeed, we have the following.

\smallskip
\begin{thmb}{\rm \cite[p.144]{BBH94}}\label{thmb:BBH-Rmk}
Assume $\deg (g,\pa \Om)=0$ and let  $u_{\ve}$ be a solution of  \eqref{eq:GL}.
If
\begin{equation}\label{eq:u to u0 in H1}
u_\ve \to u_0 \qin  H^1(\Om),
\end{equation}
then the conclusion of Theorem A is valid.
\end{thmb}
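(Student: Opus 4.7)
The plan is to upgrade the strong $H^1$ convergence $u_\ve\to u_0$ to $C^k_{\mathrm{loc}}(\Omega)$ convergence by showing that the hypothesis supplies everything the proof of Theorem~A needs. Minimality of $u_\ve$ entered there only through a uniform bound on the Dirichlet energy; since $u_\ve\to u_0$ in $H^1$ we automatically have $\int_\Omega |\nabla u_\ve|^2\,dx\le C$, and together with $\deg(g,\partial\Omega)=0$ this should exclude the formation of vortices as $\ve\to 0$.

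The first and main step is to show that $|u_\ve|\to 1$ uniformly on every compact $K\subset\Omega$. The maximum principle already yields $|u_\ve|\le 1$, since $|g|=1$ on $\partial\Omega$. Combined with the bounded Dirichlet energy and the $\eta$-compactness (or $\eta$-ellipticity) lemma of Bethuel--Brezis--H\'elein, this should force $|u_\ve|\ge \tfrac12$ on $K$ for $\ve$ small: an interior vortex of $u_\ve$ would carry logarithmic Dirichlet energy of order $\pi|\log\ve|$, incompatible with both a uniform bound on $\int|\nabla u_\ve|^2$ and the vanishing of the total degree. To bridge to the usual form of $\eta$-compactness, stated in terms of the full $G_\ve$ energy, the plan is to test \eqref{eq:GL} against $u_\ve - u_0 \in H^1_0(\Omega)$ and exploit $|u_0|=1$; the left-hand side $\int\nabla u_\ve\cdot\nabla(u_\ve-u_0)\,dx$ vanishes in the limit by the strong $H^1$ convergence, and algebraic manipulation of the right-hand side (after some further work using the pointwise bound $|u_\ve|\le 1$) leads to the control of $\tfrac{1}{\ve^2}\int(1-|u_\ve|^2)^2\,dx$ needed to feed $\eta$-compactness.

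Once $|u_\ve|\ge \tfrac12$ on $K$ is in place, I would write $u_\ve=\rho_\ve e^{i\theta_\ve}$ with $\rho_\ve\in[\tfrac12,1]$ and apply the maximum principle to the screened Poisson equation $(-\Delta+2\rho_\ve^2/\ve^2)(1-\rho_\ve^2)=2|\nabla u_\ve|^2$, together with interior gradient estimates for solutions of~\eqref{eq:GL}. This gives $1-\rho_\ve^2=O(\ve^2)$ pointwise on~$K$, so that $\ve^{-2}u_\ve(1-|u_\ve|^2)$ is uniformly bounded in $L^\infty_{\mathrm{loc}}(\Omega)$ and standard elliptic bootstrap on~\eqref{eq:GL} yields uniform $C^k_{\mathrm{loc}}$ bounds. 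Arzel\`a--Ascoli then extracts a $C^k_{\mathrm{loc}}$ subsequential limit, which must coincide with $u_0$ by the $H^1$ hypothesis, and uniqueness of the limit upgrades this to full $C^k_{\mathrm{loc}}$ convergence.

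The hard part is making the $\eta$-compactness step rigorous under only a Dirichlet-energy bound rather than the full $G_\ve$ bound available in the minimizing setting; the test-function identity above is designed to bridge this gap and is, in essence, the content of Remark~A.1 in~\cite{BBH94}.
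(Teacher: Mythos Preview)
The paper does not supply its own proof of Theorem~B; the statement is quoted from \cite[p.~144]{BBH94} and used only as a black box (e.g.\ at the end of the proof of Theorem~\ref{thm:main1}\,(i)). So there is no in-paper proof to compare against directly.

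That said, the paper does prove the key intermediate step by a different route. Lemma~\ref{lem:H1 conv implies unif conv} and the second half of the proof of Theorem~\ref{thm:main2} show that $u_\ve\to u_0$ in $H^1$ forces $|u_\ve|\to 1$ uniformly on $\overline\Om$ and then $\ve^{-2}\int_\Om(1-|u_\ve|^2)^2\,dx\to 0$. The argument there multiplies the identity \eqref{eq:1-u^2 eqn} by $1-|u_\ve|^2$, splits $\Om$ into $\{1-|u_\ve|^2>\delta\}$ and its complement, uses the standing Pohozaev bound \eqref{potential-bded} to show the bad set has vanishing measure, and finishes with Gagliardo--Nirenberg. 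This is more direct than your test against $u_\ve-u_0$: that test yields
\[
\frac{1}{2\ve^2}\int_\Om(1-|u_\ve|^2)^2\,dx \;=\; -\int_\Om\nabla u_\ve\cdot\nabla(u_\ve-u_0)\,dx \;+\; \frac{1}{2\ve^2}\int_\Om|u_\ve-u_0|^2\,(1-|u_\ve|^2)\,dx,
\]
and the last term is not $o(1)$ from $H^1$ convergence alone; you still need \eqref{potential-bded} and further work to close it.

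On that note, your stated ``hard part'' --- that only a Dirichlet bound is available, not a full $G_\ve$ bound --- is not an issue in this paper's setting: the standing assumption \eqref{potential-bded} already gives $G_\ve(u_\ve)\le C$, so $\eta$-compactness (or the simpler argument of \cite[Steps~A.1, B.2]{BBH93}, quoted here as Lemma~\ref{lem:pot to zero implies unif conv}) applies without the extra identity. Once the potential tends to zero and $|u_\ve|\to 1$ uniformly, your remaining steps (polar decomposition, maximum principle for $1-\rho_\ve^2$, elliptic bootstrap, Arzel\`a--Ascoli, uniqueness of the limit) are the standard BBH argument and are correct.
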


Theorem B tells us that the strong convergence \eqref{eq:u to u0 in H1} is a key ingredient in the proof of Theorem A.

Let $(u_\varepsilon)$ be a sequence of solutions to \eqref{eq:GL}. 
In this work, we establish that 
\[
\frac{1}{2} \int_\Omega |\nabla u_\varepsilon|^2 \, dx 
\]
admits the critical lower bound 
\[
\frac{1}{2} \int_\Omega |\nabla u_0|^2 \, dx,
\]
beyond which the sequence $(u_\varepsilon)$ cannot be lifted to a smooth function, see the proof of Theorem 1.1.


We provide another sufficient condition for Theorem A by identifying an equivalent formulation of \eqref{eq:u to u0 in H1}.
We also introduce a two-component generalization of \eqref{eq:ftnal Eb} and \eqref{eq:GL}, from which we derive analogous results.

Two facts used in the proof of Theorem A will also play a central role in this paper.


 First,  if $u_\ve$ is a solution of \eqref{eq:GL}, then
\begin{equation}
\label{u-veleq-1}
 | u_\ve|   \leq 1 \qon \overline{\Om}.
 \end{equation}
  We can prove the inequality  \eqref{u-veleq-1} by applying the maximum principle to the following identity:
 \begin{equation}\label{eq:1-u^2 eqn}
  - \Delta (1- |u_\ve|^2) = - \frac{2}{\ve^2} |u_\ve|^2 (1-|u_\ve |^2) + 2 |\nabla u_\ve|^2 \qon \Om.
\end{equation}
See \cite[Proposition 2]{BBH93}.

Second, if the domain $\Om$ is star-shaped, then for any solution $u_\varepsilon$ of \eqref{eq:GL},  the potential 
\[
\frac{1}{\varepsilon^{2}} \int_\Omega \bigl(1 - |u_\varepsilon|^{2}\bigr)^{2} \, dx 
\]
is bounded.
See \cite[Theorem III.2]{BBH94} and \cite{Str94}.
Moreover,   it is proved in \cite{Lin97} (see also \cite{DelP-F98}) that 
the potential is also bounded  provided that 
\begin{equation}\label{E-bded}
G_\varepsilon(u_\varepsilon) \leq k \ln\frac{1}{\varepsilon}
\end{equation}
for some constant $k>0$. 

In what follows, we suppose that \eqref{E-bded} is valid or $\Om$ is star-shaped. We have then
\begin{equation}\label{potential-bded}
 \frac{1}{\ve^2}  \int_\Om  \big( 1- |u_\ve|^2 \big)^2dx   \le  \ga_0.
 \end{equation}
 Here,   $\ga_0$ depends only on $\Om$ and $g$. 
The first result of this paper is the following theorem.

\smallskip
\begin{theorem}\label{thm:main1}
Suppose that
\begin{equation}\label{eq:deg zero}
\deg (g,\partial \Omega)=0.
\end{equation}
Let $u_\ve$ be a solution of \eqref{eq:GL}.
\begin{itemize}
\item[{\rm (i)}]
If there exists a constant $C_1$ such that, for $\ve$ small enough, we have
\begin{equation}\label{G < C_1}
\frac{1}{2} \int_\Omega |\nabla u_\ve|^2 \, dx \leq C_1 \leq
\frac{1}{2} \int_\Omega |\nabla u_0|^2 \, dx,
\end{equation}
then
\begin{equation}\label{eq:C1-1}
C_1 = \frac{1}{2} \int_\Omega |\nabla u_0|^2 \, dx
\end{equation}
and
\[
u_\ve \to u_0 \quad \text{in } H^1(\Omega).
\]
Thus, Theorem~A holds true by Theorem~B.

\item[{\rm (ii)}]
If there exists a constant $C_2$ such that, for $\ve$ small enough, we have
\begin{equation}\label{G bdd by C1-C2}
\frac{1}{2} \int_\Omega |\nabla u_\ve|^2 \, dx \geq C_2 > \frac{1}{2} \int_\Omega |\nabla u_0|^2 \, dx,
\end{equation}
then
\begin{equation}\label{no-uniform-convergence-2}
|u_{\ve}| \text{ does not converge uniformly to } 1 \text{ on } \overline{\Omega}.
\end{equation}
\end{itemize}

\end{theorem}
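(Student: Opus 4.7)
For part (i), the plan is to extract a weak $H^1$ limit of $u_\ve$ and use the potential bound \eqref{potential-bded} to force the limit into $H^1_g(\Om;S^1)$. Concretely, the hypothesis \eqref{G < C_1} together with \eqref{u-veleq-1} bound $(u_\ve)$ in $H^1(\Om;\cpx)$, so we may pass to a subsequence $u_\ve \rightharpoonup u^*$ weakly in $H^1(\Om;\cpx)$ and strongly in $L^2$. The bound \eqref{potential-bded} forces $(1-|u_\ve|^2)\to 0$ in $L^2$, hence $|u^*|=1$ almost everywhere; continuity of the trace gives $u^*=g$ on $\pa\Om$, so $u^*\in H^1_g(\Om;S^1)$. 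By the minimality of $u_0$ in \eqref{eq:u0 ftnal}, weak lower semicontinuity, and hypothesis \eqref{G < C_1},
\[
\tfrac{1}{2}\int_\Om |\nabla u_0|^2\,dx \;\le\; \tfrac{1}{2}\int_\Om |\nabla u^*|^2\,dx \;\le\; \liminf_{\ve\to 0}\tfrac{1}{2}\int_\Om |\nabla u_\ve|^2\,dx \;\le\; C_1 \;\le\; \tfrac{1}{2}\int_\Om |\nabla u_0|^2\,dx.
\]
Every inequality must saturate. This immediately yields \eqref{eq:C1-1}, identifies $u^*$ as a minimizer and hence $u^*=u_0$ by the uniqueness asserted in Theorem~A, and produces $\|\nabla u_\ve\|_{L^2}\to\|\nabla u_0\|_{L^2}$. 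Weak convergence plus norm convergence then gives strong $H^1$ convergence along the subsequence; uniqueness of the limit promotes it to the full family.

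For part (ii), the plan is to argue by contradiction using the phase-modulus decomposition. Suppose $|u_\ve|\to 1$ uniformly on $\overline{\Om}$. Then for $\ve$ small, $\rho_\ve:=|u_\ve|\ge 1/2$ on $\overline{\Om}$, and since $\deg(g,\pa\Om)=0$ one can lift $u_\ve=\rho_\ve e^{i\vp_\ve}$ globally with $\vp_\ve=\psi$ on $\pa\Om$, where $g=e^{i\psi}$ is a fixed continuous lift; similarly $u_0=e^{i\vp_0}$ with $\vp_0$ the harmonic extension of $\psi$. The equation \eqref{eq:GL} decouples into
\[
-\Delta \rho_\ve + \rho_\ve|\nabla\vp_\ve|^2 = \tfrac{1}{\ve^2}\rho_\ve(1-\rho_\ve^2), \qquad -\mathrm{div}\bigl(\rho_\ve^2\nabla\vp_\ve\bigr)=0.
\]
The second equation says $\vp_\ve$ minimizes $\int_\Om \rho_\ve^2|\nabla\vp|^2\,dx$ over all $\vp$ with trace $\psi$, so testing with $\vp_0$ and using $\rho_\ve\le 1$ from \eqref{u-veleq-1} gives $\int_\Om \rho_\ve^2|\nabla\vp_\ve|^2\,dx \le \int_\Om |\nabla u_0|^2\,dx$. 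Testing the first equation against $(1-\rho_\ve)$, which vanishes on $\pa\Om$ because $|g|=1$, and integrating by parts yields
\[
\int_\Om |\nabla\rho_\ve|^2\,dx + \tfrac{1}{\ve^2}\int_\Om \rho_\ve(1-\rho_\ve)^2(1+\rho_\ve)\,dx = \int_\Om \rho_\ve(1-\rho_\ve)|\nabla\vp_\ve|^2\,dx,
\]
so $\int_\Om |\nabla\rho_\ve|^2\,dx \le 2\,\|1-\rho_\ve\|_{L^\infty(\Om)}\int_\Om |\nabla u_0|^2\,dx = o(1)$. Adding the two bounds gives $\tfrac{1}{2}\int_\Om |\nabla u_\ve|^2\,dx \le \tfrac{1}{2}\int_\Om |\nabla u_0|^2\,dx + o(1)$, contradicting \eqref{G bdd by C1-C2} for $\ve$ small.

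I anticipate that the principal technical point is the global lift $u_\ve=\rho_\ve e^{i\vp_\ve}$ used in (ii), which is sensitive to the topology of $\Om$: one must verify that, for $\ve$ small, $u_\ve$ has degree zero around every connected component of $\pa\Om$, and this is where the zero-degree hypothesis \eqref{eq:deg zero} is essential. Once the lift exists, the argument rests on two elementary ingredients, namely the weighted harmonicity of $\vp_\ve$ (which, thanks to \eqref{u-veleq-1}, yields the comparison with $\vp_0$ with only a favourable weight $\rho_\ve^2\le 1$) and the test of the modulus equation by $(1-\rho_\ve)$; these are routine once the decomposition is set up.
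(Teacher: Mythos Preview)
Your proof of part~(i) is correct and matches the paper's argument essentially line for line: weak compactness from \eqref{G < C_1} and \eqref{u-veleq-1}, the potential bound \eqref{potential-bded} to force $|u^*|=1$ a.e., the chain of inequalities from minimality of $u_0$ and weak lower semicontinuity, and finally strong convergence from weak convergence plus norm convergence.

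For part~(ii) your argument is also correct, and in fact slightly more direct than the paper's. Both approaches pass to the polar decomposition $u_\ve=\rho_\ve e^{i\zeta_\ve}$ once $\rho_\ve\ge 1/2$, and both use the two natural tests: the divergence equation $\mathrm{div}(\rho_\ve^2\nabla\zeta_\ve)=0$ tested against $\zeta_\ve-\vp$ (your variational characterisation of $\zeta_\ve$ is equivalent) to obtain $\int_\Om\rho_\ve^2|\nabla\zeta_\ve|^2\,dx\le\int_\Om|\nabla u_0|^2\,dx$, and the modulus equation tested against $\rho_\ve-1$. The paper, however, organises these into two separate lemmas: Lemma~\ref{lem:unif conv} first extracts only the rough bound $\int_\Om|\nabla u_\ve|^2\,dx\le 2\int_\Om|\nabla u_0|^2\,dx$, and then Lemma~\ref{lem:limsup} runs a $\delta$-argument (using $\rho_\ve<\rho_\ve^2+\delta$ for $\ve$ small) to sharpen this to $\limsup_{\ve\to 0}\int_\Om|\nabla u_\ve|^2\,dx\le\int_\Om|\nabla u_0|^2\,dx$. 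You bypass the $\delta$-argument entirely by reading off from the modulus identity that
\[
\int_\Om|\nabla\rho_\ve|^2\,dx \;\le\; \int_\Om\rho_\ve(1-\rho_\ve)|\nabla\zeta_\ve|^2\,dx \;\le\; 2\|1-\rho_\ve\|_\infty\int_\Om\rho_\ve^2|\nabla\zeta_\ve|^2\,dx \;=\; o(1),
\]
which together with $\int_\Om\rho_\ve^2|\nabla\zeta_\ve|^2\,dx\le\int_\Om|\nabla u_0|^2\,dx$ gives the sharp bound in one step. Your caution about the existence of the global lift is warranted in principle, but the paper performs the same lift without further comment; under the standing assumption that $\Om$ is star-shaped (hence simply connected) the lift is automatic, so this is not a gap relative to the paper.
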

\medskip

By using Theorem \ref{thm:main1}, we prove the next theorem where
 we find a condition that is equivalent to \eqref{eq:u to u0 in H1}.

\medskip
\begin{theorem}\label{thm:main2}
Let us assume \eqref{eq:deg zero} and let  $u_\ve$ be a solution for  \eqref{eq:GL}.
Then,
\begin{equation}\label{eq:potential to zero}
\lim_{\ve \to 0} \frac{1}{\ve^2} \int_\Om  \big( 1- |u_\ve|^2 \big)^2 dx=0
\end{equation}
if and only if
\begin{equation}\label{eq:lim nabla u}
   u_\ve  ~\to ~   u_0 \qin H^1(\Om).
\end{equation}
\end{theorem}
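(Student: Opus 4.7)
The plan is to prove the two implications in \eqref{eq:potential to zero}$\Leftrightarrow$\eqref{eq:lim nabla u} separately, combining the machinery already at hand: Theorem~A together with Theorem~B for the forward direction, and a key energy identity together with Theorem~\ref{thm:main1}(ii) for the reverse.

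For \eqref{eq:lim nabla u}$\Rightarrow$\eqref{eq:potential to zero}: by Theorem~B, the hypothesis $u_\ve\to u_0$ in $H^1(\Om)$ triggers the conclusion of Theorem~A, so $u_\ve\to u_0$ in $C^k_{\mathrm{loc}}(\Om)$. Since $u_\ve=u_0=g$ on the smooth boundary $\pa\Om$, standard elliptic boundary regularity for the semilinear Dirichlet problems \eqref{eq:GL} and \eqref{eq:u0 GL} extends the convergence up to $C^2(\barOm)$. Taking the real part of \eqref{eq:GL} multiplied by $\bar u_\ve$ yields $\ve^{-2}|u_\ve|^2(1-|u_\ve|^2)=-\mathrm{Re}(\bar u_\ve\Delta u_\ve)$, whose right-hand side converges uniformly on $\barOm$ to $-\mathrm{Re}(\bar u_0\Delta u_0)=|u_0|^2|\nabla u_0|^2=|\nabla u_0|^2$ by virtue of \eqref{eq:u0 GL}. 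Since $|u_\ve|\to 1$ uniformly on $\barOm$, we deduce $\ve^{-2}(1-|u_\ve|^2)\to|\nabla u_0|^2$ uniformly on $\barOm$; multiplying by the uniformly vanishing factor $(1-|u_\ve|^2)$ and integrating gives \eqref{eq:potential to zero}.

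For \eqref{eq:potential to zero}$\Rightarrow$\eqref{eq:lim nabla u}: testing \eqref{eq:GL} against $\overline{u_\ve-u_0}$ (which vanishes on $\pa\Om$) and combining $-\Delta u_0=u_0|\nabla u_0|^2$ with the polarization identity $2\,\mathrm{Re}(u_\ve\bar u_0)=|u_\ve|^2+1-|u_\ve-u_0|^2$ (where the $1$ uses $|u_0|=1$) produces the key energy identity
\begin{multline*}
\int_\Om|\nabla(u_\ve-u_0)|^2\,dx \;+\; \frac{1}{2\ve^2}\int_\Om(1-|u_\ve|^2)^2\,dx \\
= \; \frac{1}{2\ve^2}\int_\Om(1-|u_\ve|^2)|u_\ve-u_0|^2\,dx \;+\; \frac{1}{2}\int_\Om|\nabla u_0|^2(1-|u_\ve|^2)\,dx \\
+\; \frac{1}{2}\int_\Om|\nabla u_0|^2|u_\ve-u_0|^2\,dx.
\end{multline*}
Under \eqref{eq:potential to zero}, one has $\int_\Om(1-|u_\ve|^2)^2\,dx = o(\ve^2)$; feeding this into \eqref{eq:1-u^2 eqn} and combining the maximum principle with elliptic $L^\infty$ estimates yields $\|1-|u_\ve|^2\|_{L^\infty(\barOm)}\to 0$, i.e.\ $|u_\ve|\to 1$ uniformly on $\barOm$. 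The contrapositive of Theorem~\ref{thm:main1}(ii) then prevents $\tfrac12\int|\nabla u_\ve|^2$ from being bounded below by any constant strictly larger than $\tfrac12\int|\nabla u_0|^2$ along any subsequence, so $\limsup\int|\nabla u_\ve|^2\leq\int|\nabla u_0|^2$ and $(u_\ve)$ is bounded in $H^1(\Om)$. Extracting a weak limit $u^*$, lower semicontinuity together with \eqref{eq:potential to zero} forces $|u^*|=1$ a.e., so $u^*\in H^1_g(\Om;S^1)$; the minimizing property of $u_0$ in \eqref{eq:u0 ftnal} then gives $\int|\nabla u^*|^2\geq\int|\nabla u_0|^2$, which combined with the upper bound yields $\int|\nabla u^*|^2=\int|\nabla u_0|^2=\lim\int|\nabla u_\ve|^2$. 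Uniqueness of the minimizer identifies $u^*=u_0$, and weak convergence plus convergence of the Dirichlet energies upgrades to strong $H^1$ convergence, proving \eqref{eq:lim nabla u}.

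The central obstacle is establishing the uniform convergence $|u_\ve|\to 1$ on $\barOm$ under assumption \eqref{eq:potential to zero}; once this is in hand, Theorem~\ref{thm:main1}(ii) and the standard compactness/uniqueness argument for the minimizer $u_0$ package the remainder of the proof.
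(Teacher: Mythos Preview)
Your implication \eqref{eq:potential to zero}$\Rightarrow$\eqref{eq:lim nabla u} follows the paper's argument: uniform convergence of $|u_\ve|$ (Lemma~\ref{lem:pot to zero implies unif conv}), the contrapositive of Theorem~\ref{thm:main1}(ii) (which is precisely Lemma~\ref{lem:limsup}), weak compactness, identification of the weak limit with $u_0$ via its minimality, and norm convergence upgrading weak to strong $H^1$. The energy identity you derive is correct but plays no role in the argument as written and can be dropped.

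Your implication \eqref{eq:lim nabla u}$\Rightarrow$\eqref{eq:potential to zero} takes a different route from the paper and has a gap. You invoke Theorem~B to obtain $C^k_{\mathrm{loc}}(\Om)$ convergence and then assert that ``standard elliptic boundary regularity'' upgrades this to $C^2(\barOm)$. This step is not justified: the right-hand side of \eqref{eq:GL} carries the diverging factor $\ve^{-2}$, so Schauder estimates do not give $\ve$-uniform $C^2$ bounds up to $\pa\Om$ unless one already controls $\ve^{-2}(1-|u_\ve|^2)$ in $L^\infty(\barOm)$---which is essentially the quantity you are trying to show is bounded. With only $C^k_{\mathrm{loc}}$ convergence your argument handles any compact $K\subset\Om$ but leaves the contribution from the boundary layer $\Om\setminus K$ uncontrolled.

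The paper avoids this by working entirely with integral identities. It multiplies \eqref{eq:1-u^2 eqn} by $1-|u_\ve|^2$ and uses the $H^1$ convergence together with a splitting over the sets $\{1-|u_\ve|^2>\delta\}$ to show $\int_\Om|\nabla u_\ve|^2(1-|u_\ve|^2)\,dx\to 0$; this forces both $\ve^{-2}\int_\Om|u_\ve|^2(1-|u_\ve|^2)^2\,dx\to 0$ and $\|\nabla(1-|u_\ve|^2)\|_2\to 0$, after which the Gagliardo--Nirenberg inequality $\|w\|_3^3\le C\|w\|_2^2\|\nabla w\|_2$ applied to $w=1-|u_\ve|^2$ closes the gap between $\ve^{-2}\int(1-|u_\ve|^2)^2$ and $\ve^{-2}\int(1-|u_\ve|^2)^3$ (this is the content of Lemma~\ref{lem:H1 conv implies unif conv}). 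That argument uses nothing beyond the hypothesis and \eqref{potential-bded}. If you wish to salvage your route, you would need to import the $C^{1,\alpha}(\barOm)$ convergence from \cite{BBH93} (stronger than what Theorem~A states here) and combine it with a barrier argument for \eqref{eq:1-u^2 eqn} to bound $\ve^{-2}(1-|u_\ve|^2)$ uniformly near $\pa\Om$.
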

\bigskip

As a two-component generalization of \eqref{eq:ftnal Eb}, let us consider
 \begin{equation}
\label{eq:ftnal G2V}
\begin{aligned}
F_\ve (u, v )  = &\frac12 \int_\Om  \big(|\nabla u|^2+|\nabla v|^2\big) \,dx + \frac{1}{4\ve^2}\int_\Om  V(|u|^2,|v|^2) \,dx
 \end{aligned}
\end{equation}
for $(u_\ve,v) \in H^1_{g_1} (\Om) \times H^1_{g_2}(\Om)$.
Here,  $ g_1, g_2  : \pa \Om \to S^1$ are smooth maps such that
\begin{equation}\label{eq:deg of g sys}
d_i=\deg (g_i,\pa\Om)
\end{equation}
 is a nonnegative integer for each $i=1,2$.
 We assume that $\Om$ is star-shaped.
The potential function $V$ is given two cases:
\begin{align*}
\mbox{symmetric case: }& V_s(|u|^2,|v|^2)=(2-|u|^2-|v|^2)^2,\\
\mbox{non-symmetric case: }& V_n(|u|^2,|v|^2)=(2-|u|^2-|v|^2)^2 + (1-|u|^2)^2.
\end{align*}
In each case, $F_\ve$ has a minimizer $(u_\ve,v_\ve)$ over $H^1_{g_1} (\Om) \times H^1_{g_2}(\Om)$.
The potential appears in the semi-local gauge field theories \cite{Hi92,VaAch91}.

The Euler-Lagrange equations are given as follows: for $V=V_s$
\begin{equation}
\label{eq:2-GL sym}
\left\{
\begin{aligned}
-\Delta u & = \frac{1}{\ve^2} u  \big(2-|u |^2-|v |^2\big)     \qin \Omega,\\
-\Delta v & = \frac{1}{\ve^2}v  \big(2-|u  |^2-|v |^2\big) \qin \Omega,\\
u &=g_1, \quad v =g_2 \qquad \quad ~ \qon \pa\Omega,
\end{aligned}
\right.
\end{equation}
and for $V=V_n$
\begin{equation}
\label{eq:2-GL nonsym}
\left\{
\begin{aligned}
-\Delta u & = \frac{1}{\ve^2} u  \big(2-|u |^2-|v |^2\big)  +  \frac{1}{\ve^2} u   (1- |u |^2 )   \qin \Omega,\\
-\Delta v & = \frac{1}{\ve^2}v  \big(2-|u |^2-|v |^2\big)  \qquad\qquad\qquad\quad ~  \qin \Omega,\\
u &=g_1, \quad v =g_2  \qquad\qquad\qquad \qquad \quad\quad ~~  \qon \pa\Omega.
\end{aligned}
\right.
\end{equation}
Now, we want to extend Theorem \ref{thm:main1} for solutions of \eqref{eq:2-GL sym} and \eqref{eq:2-GL nonsym}.
Since \eqref{u-veleq-1} and \eqref{potential-bded} play important roles  in the proof Theorem \ref{thm:main1}, a  natural question arises: can we have inequalities for solutions of  \eqref{eq:2-GL sym} and \eqref{eq:2-GL nonsym} analogous to \eqref{u-veleq-1} and \eqref{potential-bded}?
The answer is not easy.
n fact, although the systems \eqref{eq:2-GL sym} and \eqref{eq:2-GL nonsym} appear to be simple extensions of \eqref{eq:GL}, the nature of their solutions is quite different, as we shall see.

First, one may expect that  if $(u_\ve,v_\ve)$ is a solution of \eqref{eq:2-GL nonsym}, then
\begin{equation}\label{eq:uv less that 1}
|u_\ve|\le 1 \qand |v_\ve|\le 1 \qon \overline{\Om}.
\end{equation}

We recall that \eqref{u-veleq-1} was obtained using the maximum principle applied to the equation \eqref{eq:1-u^2 eqn}.
However, since \eqref{eq:2-GL sym} and \eqref{eq:2-GL nonsym} are systems of equations, it is not possible to derive such an estimate by simply applying the maximum principle.
Instead, weaker versions of \eqref{eq:uv less that 1} were established in \cite{HHS23ANONA,HHS-nonsym-deg-0}

\smallskip
\begin{lemma}\cite[Lemma 2.2]{HHS23ANONA}, \cite[Lemma 2.1]{HHS-nonsym-deg-0}
\label{lem:L-infty variant} $\;$
\begin{itemize}
\item[{\rm (i)}]
If  $(u_\ve,v_\ve) $ is a solution pair of  \eqref{eq:2-GL sym}, then we   have
\begin{equation}
\label{eq:L-infty var sym}
|u_\ve|^2 + |v_\ve|^2 \leq 2 \qon \overline\Om.
\end{equation}

\item[{\rm (ii)}]
If  $(u_\ve,v_\ve) $ is a solution pair of  \eqref{eq:2-GL nonsym}, then we   have
\begin{equation}
\label{eq:L-infty var}
|u_\ve|^2  \leq \frac{3}{2} \qand  |v_\ve|^2 \leq 2 \qon \overline\Om.
\end{equation}
Moreover, either $|u_\ve|\le 1$ or $|v_\ve| \le 1$ on $\overline\Om$.
\end{itemize}
\end{lemma}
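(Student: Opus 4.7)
The plan is to apply the weak maximum principle to well-chosen scalar combinations of $|u_\ve|^2$ and $|v_\ve|^2$, starting from the pointwise identity
\[
-\Delta|w|^2 = -2\,\mathrm{Re}\bigl(\overline{w}\,\Delta w\bigr) - 2|\nabla w|^2
\]
applied to each component of the two systems.

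For part~(i), I set $\phi := |u_\ve|^2 + |v_\ve|^2 - 2$. Multiplying the first equation of \eqref{eq:2-GL sym} by $\overline{u_\ve}$, the second by $\overline{v_\ve}$, taking real parts and adding will give
\[
-\Delta \phi + \frac{2}{\ve^2}\bigl(|u_\ve|^2 + |v_\ve|^2\bigr)\,\phi = -2\bigl(|\nabla u_\ve|^2 + |\nabla v_\ve|^2\bigr) \leq 0 \qin \Om,
\]
with $\phi = 0$ on $\pa\Om$. Since the zeroth-order coefficient is nonnegative, the weak maximum principle yields $\phi \leq 0$, which is \eqref{eq:L-infty var sym}.

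For part~(ii), the analogous computations on \eqref{eq:2-GL nonsym} produce
\begin{align*}
-\Delta|u_\ve|^2 &= \frac{2}{\ve^2}|u_\ve|^2\bigl(3 - 2|u_\ve|^2 - |v_\ve|^2\bigr) - 2|\nabla u_\ve|^2,\\
-\Delta|v_\ve|^2 &= \frac{2}{\ve^2}|v_\ve|^2\bigl(2 - |u_\ve|^2 - |v_\ve|^2\bigr) - 2|\nabla v_\ve|^2.
\end{align*}
Writing $\psi := |v_\ve|^2 - 2$ and $\varphi := |u_\ve|^2 - 3/2$, a direct rearrangement yields
\[
-\Delta\psi + \frac{2}{\ve^2}|v_\ve|^2\,\psi = -\frac{2}{\ve^2}|u_\ve|^2|v_\ve|^2 - 2|\nabla v_\ve|^2 \leq 0,
\]
\[
-\Delta\varphi + \frac{4}{\ve^2}|u_\ve|^2\,\varphi = -\frac{2}{\ve^2}|u_\ve|^2|v_\ve|^2 - 2|\nabla u_\ve|^2 \leq 0,
\]
with boundary values $\psi = -1$ and $\varphi = -1/2$ on $\pa\Om$. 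The weak maximum principle then delivers the two bounds in \eqref{eq:L-infty var}.

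The dichotomy at the end is the delicate step. I would argue by contradiction, assuming that $A_u := \{x\in\bar\Om : |u_\ve(x)|^2>1\}$ and $A_v := \{x\in\bar\Om : |v_\ve(x)|^2>1\}$ are both nonempty; each is then an open subset of $\Om$ whose closure is disjoint from $\pa\Om$. Evaluating the second equation of \eqref{eq:2-GL nonsym} at an interior maximum of $|v_\ve|^2$ forces $|u_\ve|^2 + |v_\ve|^2 \leq 2$, hence $|u_\ve|<1$ at that point; symmetrically, at an interior maximum of $|u_\ve|^2$ the first equation forces $|v_\ve|<1$. On $A_v$ one has $3-2|u_\ve|^2-|v_\ve|^2 < 2\bigl(1-|u_\ve|^2\bigr)$, so
\[
-\Delta|u_\ve|^2 + \frac{4}{\ve^2}|u_\ve|^2\bigl(|u_\ve|^2 - 1\bigr) \leq 0 \qin A_v,
\]
which is the Ginzburg--Landau-type inequality needed to run a maximum-principle argument on $|u_\ve|^2-1$ over $A_v$. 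The main obstacle will be controlling the boundary data on $\pa A_v \cap \Om$ — where $|v_\ve|=1$ but $|u_\ve|$ is a priori free — so that the argument closes and the symmetric reasoning on $A_u$ yields the desired contradiction.
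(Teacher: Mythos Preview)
The paper does not prove this lemma; it is quoted from \cite{HHS23ANONA} and \cite{HHS-nonsym-deg-0} with no argument given here, so there is nothing in the present paper to compare against and your proposal must stand on its own.

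Your arguments for part~(i) and for the two numerical bounds $|u_\ve|^2\le 3/2$, $|v_\ve|^2\le 2$ in part~(ii) are correct. The scalar functions $\phi=|u_\ve|^2+|v_\ve|^2-2$, $\psi=|v_\ve|^2-2$, $\varphi=|u_\ve|^2-3/2$ each satisfy an inequality of the form $-\Delta w + c\,w\le 0$ with $c\ge 0$ and nonpositive boundary data, so the weak maximum principle applies exactly as you describe.

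The dichotomy ``either $|u_\ve|\le 1$ or $|v_\ve|\le 1$ on $\overline\Om$'' is genuinely incomplete in your proposal, and the gap is real, not cosmetic. Your inequality
\[
-\Delta(|u_\ve|^2-1)+\frac{4}{\ve^2}|u_\ve|^2\,(|u_\ve|^2-1)\le 0 \qin A_v
\]
is correct, but the maximum principle on $A_v$ only yields $\sup_{A_v}(|u_\ve|^2-1)\le \sup_{\partial A_v}(|u_\ve|^2-1)^{+}$, and you have no control of $|u_\ve|$ on $\partial A_v\cap\Om$. The two pointwise observations you make --- that $|u_\ve|<1$ at the global maximum of $|v_\ve|^2$ and $|v_\ve|<1$ at the global maximum of $|u_\ve|^2$ --- are true but only show that these two maxima are attained at distinct points; they do not force either $A_u$ or $A_v$ to be empty, nor do they feed back into the boundary condition on $\partial A_v$. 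The ``symmetric reasoning on $A_u$'' you allude to suffers from the same defect on $\partial A_u$, so the two halves of the argument never connect. A complete proof requires an additional device --- for example a single auxiliary function coupling $|u_\ve|^2$ and $|v_\ve|^2$ to which the maximum principle can be applied on all of $\Om$, or a component-by-component argument that actually pins down the boundary values --- and none is supplied here. For this part you should consult the proof in \cite{HHS-nonsym-deg-0} directly.
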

\smallskip

The first statement (i) gives no information on the indivisual upper bounds of $|u_\ve|$ and $|v_\ve|$ although theirs sums are bounded by $2$.
The second statement provide no information on the bounds of $|u_\ve|^2 + |v_\ve|^2$ and the upper bounds of $|u_\ve|$ and $|v_\ve|$ are rather rough compared to \eqref{eq:uv less that 1}.
Since the pointwise estimate $|u_\ve|\le 1$ for solutions of \eqref{eq:GL} are crucial in various analysis of solutions, it is very interesting to prove \eqref{eq:uv less that 1} or to make analysis of solutions of \eqref{eq:2-GL sym} and \eqref{eq:2-GL nonsym} without appealing the property of \eqref{eq:uv less that 1}.

Second difference among solutions  of  \eqref{eq:GL}, \eqref{eq:2-GL sym} and \eqref{eq:2-GL nonsym} is the Pohozaev identity.
Analogous to \eqref{potential-bded}, we can prove that if $\Om$ is star-shaped, then
\begin{align}
\label{eq:Pohozaev sys sym}
\text{$(u_\ve,v_\ve)$: solution of \eqref{eq:2-GL sym}} ~ & \Longrightarrow ~
 \frac{1}{ \ve^2}\int_\Om   (2-|u_\ve|^2-|v_\ve|^2)^2 dx \le \ga_1 ,\\
\nonumber
\text{$(u_\ve,v_\ve)$: solution of \eqref{eq:2-GL nonsym}} ~ & \Longrightarrow ~
 \frac{1}{ \ve^2}\int_\Om   (2-|u_\ve|^2-|v_\ve|^2)^2 dx  \\
\label{eq:Pohozaev sys nonsym}
 &\qquad \qquad\qquad+ \frac{1}{ \ve^2}\int_\Om (1-|u_\ve|^2)^2 \,dx \le \ga_2
 \end{align}
for some constants $\ga_1$ and $\ga_2$.
Since we do not know the signs of $1-|u_\ve|^2$ and $1-|v_\ve|^2$, \eqref{eq:Pohozaev sys sym} does not imply
\begin{equation}\label{eq:pot u and v}
 \frac{1}{ \ve^2}\int_\Om (1-|u_\ve|^2)^2 \,dx~<~\infty \qand   \frac{1}{ \ve^2}\int_\Om (1-|v_\ve|^2)^2 \,dx~<~\infty .
\end{equation}
Indeed, these quantities can diverge for some solutions of \eqref{eq:2-GL sym} although they satisfy   \eqref{eq:Pohozaev sys sym}.
See Theorem C below.
On the other hand,   solutions of \eqref{eq:2-GL nonsym} always satisfy not only \eqref{eq:Pohozaev sys nonsym} but also \eqref{eq:pot u and v}.
See the proof of Theorem \ref{thm:main4} (ii) below.

 To state the main results on the solutions of  \eqref{eq:2-GL sym} and \eqref{eq:2-GL nonsym}, we assume that $d_1=d_2=0$ in \eqref{eq:deg of g sys} and $\Om$ is star-shaped..
 We set
 \begin{align*}
 \calY(g_1,g_2) &:= H^1_{g_1} (\Om;S^1) \times H^1_{g_2} (\Om;S^2),\\
 \calX(g_1, g_2) & : =\Big\{  (u_ ,v) \in H^1_{g_1}(\Om; \mathbb C) \times   H^1_{g_2} (\Om; \mathbb C) ~:~  |u|^2 + |v|^2= 2~  \text{ a.e. on } \Om\Big\},
 \end{align*}
 and
 \[   I_{(g_1,g_2)} (u , v)   =\frac12  \int_\Om  \big(|\nabla u|^2+|\nabla v|^2  \big)\, dx  = J_{g_1}(u)+J_{g_2} (v).
\]
Let us consider the following minimization problems:
\begin{align}
\label{eq:min prob alpha}
\alpha(g_1,g_2) &:= \inf \big\{ I_{(g_1,g_2)} (u,v) ~:~ (u,v) \in \calY (g_1,g_2)\big\},\\
\label{eq:min prob beta}
\beta(g_1,g_2) &:= \inf \big\{ I_{(g_1,g_2)} (u,v) ~:~ (u,v) \in \calX (g_1,g_2)\big\}.
\end{align}
The problem \eqref{eq:min prob alpha} has   a unique solution  $(u_0,v_0)$ on $\calY(g_1,g_2)$ that satisfies
\[ \left\{
\begin{aligned}
-\Delta u_0& = u_0 |\nabla u|^2 \quad \text{on}\quad \Om,\\
u_0&=g_1 \qquad \qon \pa\Omega, \\
|u_0|&=1\qquad ~~ \qon \Om,
\end{aligned}\right.
\qquad\left\{
\begin{aligned}
-\Delta v& = v |\nabla v|^2 \quad \text{on}\quad \Om,\\
v_0&=g_2 \qquad \qon \pa\Omega, \\
|v_0|&=1 \qquad ~~ \qon \Om.
\end{aligned}\right.
\]
If $ (u_*,v_*)$ is a solution of \eqref{eq:min prob beta}, then $(u_*,v_*)$ satisfies
\[ \left\{
\begin{aligned}
-\Delta u_*& = \frac12 u_* ( |\nabla u_*|^2+ |\nabla v_*|^2) ~~ \text{on}~~ \Om, \quad
u_* =g_1    ~~ \text{on}~~  \pa\Omega, \\
-\Delta v_*& = \frac12 v_* ( |\nabla u_*|^2+ |\nabla v_*|^2) ~~ \text{on}~~ \Om, \quad
v_* =g_2    ~~ \text{on}~~  \pa\Omega, \\
2&=|u_*|^2+|v_*|^2 ~~~ \text{a.e.} \quad  \qon \Om.
\end{aligned}\right.
\]
Since $\calY(g_1,g_2) \subset \calX(g_1,g_2)$, it is obvious that
\begin{equation}\label{eq:alpha >= beta}
\alpha (g_1,g_2) \ge \beta (g_1,g_2).
\end{equation}
The next theorem tells us that  \eqref{eq:alpha >= beta} has a close relation with some properties of solutions of \eqref{eq:2-GL sym}.

\smallskip
\begin{thmc}\cite[Theorem 1.3 (iii)]{HHS23ANONA}
Suppose that
\begin{equation}\label{eq:deg zero g1 g2}
\deg (g_1,\pa \Om)= \deg (g_2,\pa \Om)=0.
\end{equation}
Let $(u_\ve,v_\ve)$ be a minimizer of \eqref{eq:ftnal G2V} with $V=V_s$.
If $\al(g_1,g_2)>\beta(g_1,g_2)$, then
\[ \lim_{\ve \to 0} \frac{1}{\ve^2} \int_\Om \big( 1- |u_\ve|^2 \big)^2 dx = \lim_{\ve \to 0} \frac{1}{\ve^2} \int_\Om \big( 1- |v_\ve|^2 \big)^2 dx =\infty.
\]
\end{thmc}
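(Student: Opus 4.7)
The plan is to compare the minimizer $(u_\ve, v_\ve)$ with admissible pairs from $\calX(g_1, g_2)$, which produces a uniform bound on $F_\ve(u_\ve, v_\ve)$ by $\beta(g_1, g_2)$, and then to argue by contradiction: if either of the integrals $\frac{1}{\ve^2}\int_\Om (1-|u_\ve|^2)^2\,dx$ or $\frac{1}{\ve^2}\int_\Om (1-|v_\ve|^2)^2\,dx$ stays bounded along a subsequence, the weak $H^1$ limit of $(u_\ve, v_\ve)$ will lie in $\calY(g_1, g_2)$, forcing $\alpha(g_1, g_2) \leq \beta(g_1, g_2)$ and contradicting the hypothesis.

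For the energy comparison, note that any $(u,v) \in \calX(g_1, g_2)$ satisfies $V_s(|u|^2, |v|^2) \equiv 0$, so $F_\ve(u,v) = I_{(g_1, g_2)}(u,v)$. Infimizing over $\calX(g_1,g_2)$ and using the minimality of $(u_\ve, v_\ve)$ over the larger space $H^1_{g_1}(\Om) \times H^1_{g_2}(\Om)$ yields
\[
F_\ve(u_\ve, v_\ve) \leq \beta(g_1, g_2).
\]
In particular, $\{u_\ve\}$ and $\{v_\ve\}$ are bounded in $H^1$, and
\[
\int_\Om (2 - |u_\ve|^2 - |v_\ve|^2)^2\,dx \leq 4 \ve^2 \beta(g_1, g_2).
\]

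Now suppose toward contradiction that $\frac{1}{\ve^2}\int_\Om (1-|u_\ve|^2)^2\,dx$ is bounded along some sequence $\ve_n \to 0$; the case of $v_\ve$ is identical by symmetry. Using Lemma~\ref{lem:L-infty variant}(i) for uniform $L^\infty$ bounds and Rellich compactness in dimension two, extract a further subsequence so that $u_{\ve_n} \rightharpoonup u^*$, $v_{\ve_n} \rightharpoonup v^*$ weakly in $H^1$, strongly in $L^4$, with traces $g_1, g_2$ preserved. Then $\int_\Om (1-|u_{\ve_n}|^2)^2\,dx \to 0$ forces $|u^*|^2 = 1$ a.e., and the previous bound forces $|u^*|^2 + |v^*|^2 = 2$ a.e., so $|v^*|^2 = 1$ a.e.\ as well. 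Hence $(u^*, v^*) \in \calY(g_1, g_2)$, and by weak lower semi-continuity
\[
\alpha(g_1, g_2) \leq I_{(g_1, g_2)}(u^*, v^*) \leq \liminf_{n \to \infty} \frac12 \int_\Om \bigl(|\nabla u_{\ve_n}|^2 + |\nabla v_{\ve_n}|^2\bigr)\,dx \leq \beta(g_1, g_2),
\]
contradicting $\alpha(g_1,g_2) > \beta(g_1,g_2)$. Therefore both integrals must diverge to $+\infty$.

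The main obstacle I expect is in ensuring that the weak limit satisfies both pointwise constraints $|u^*| = |v^*| = 1$ a.e.\ required for membership in $\calY(g_1, g_2)$: the hypothetical boundedness controls only $|u_\ve|^2 \to 1$, and it is the coupling provided by the Pohozaev-type quantity $\frac{1}{\ve^2}\int (2-|u_\ve|^2-|v_\ve|^2)^2\,dx$ that transfers the constraint to $v^*$. This transfer relies essentially on the $L^\infty$ bound from Lemma~\ref{lem:L-infty variant}(i), which upgrades weak $H^1$ convergence to strong $L^2$ convergence of $|u_\ve|^2$ and $|v_\ve|^2$; without such a bound, identifying the modulus of the weak limit would require a substantially different approach.
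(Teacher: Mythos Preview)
The paper does not contain a proof of Theorem~C; it is quoted verbatim from the external reference \cite[Theorem~1.3(iii)]{HHS23ANONA} and used only as background for motivating Theorem~\ref{thm:main4}. There is therefore no in-paper argument to compare against.

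That said, your argument is correct and is the natural one. The comparison $F_\ve(u_\ve,v_\ve)\le \beta(g_1,g_2)$ via competitors in $\calX(g_1,g_2)$ is exactly what minimality buys here, and the contradiction step is clean: boundedness of $\ve^{-2}\int_\Om(1-|u_{\ve_n}|^2)^2\,dx$ forces $|u^*|=1$ a.e., the potential bound forces $|u^*|^2+|v^*|^2=2$ a.e., and together these place the weak limit in $\calY(g_1,g_2)$, so lower semicontinuity gives $\alpha(g_1,g_2)\le\beta(g_1,g_2)$. One small remark on your closing paragraph: in dimension two the compact embedding $H^1(\Om)\hookrightarrow L^4(\Om)$ already yields strong $L^4$ convergence along the subsequence, hence $|u_{\ve_n}|^2\to|u^*|^2$ and $|v_{\ve_n}|^2\to|v^*|^2$ in $L^2$, without invoking the $L^\infty$ bound from Lemma~\ref{lem:L-infty variant}(i). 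So that lemma is a convenience here rather than an essential ingredient, and the ``obstacle'' you flag is milder than suggested.
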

\medskip

Now, we extend Theorem \ref{thm:main1} for solutions of  \eqref{eq:2-GL nonsym} as follows.

\medskip
\begin{theorem}\label{thm:main3}
Let $\Om$ be star-shaped.
Suppose that $d_1=d_2=0$ such that  \eqref{eq:deg zero g1 g2} holds.
Let  $(u_\ve,v_\ve)$ be a solution for \eqref{eq:2-GL nonsym} and $(u_0,v_0)$ be a unique minimizer of $I_{(g_1,g_2)}$ on $\calY(g_1,g_2)$.
\begin {itemize}
\item[{\rm (i)}]
If  there is a constant $C_3$  such that  we have for $\ve$ small enough
\begin{equation}\label{G < C_1-sys-nonsym}
\frac{1}{2} \int_\Omega \big( |\nabla u_\ve|^2 + |\nabla v_\ve|^2 \big)\, dx \leq C_3 \leq
\frac{1}{2} \int_\Omega \big( |\nabla u_0|^2 + |\nabla v_0|^2 \big)\, dx ,
\end{equation}
then
\begin{equation}\label{eq:C1-1-sys-nonsym}
C_3 = \int_\Omega \big( |\nabla u_0|^2 + |\nabla v_0|^2 \big)\, dx
\end{equation}
and
\[  (u_\ve,v_\ve)  ~\to ~  ( u_0,v_0) \qin  H^1(\Om)\times H^1(\Om).
\]

\item[{\rm (ii)}]
If  there is a constant $C_4$  such that for $\ve$ small enough we have
\begin{equation}\label{G bdd by C1-C2-sys-nonsym}
 \frac12 \int_\Om \big( |\nabla u_\ve|^2 + |\nabla v_\ve|^2 \big)\, dx \geq C_4 >   \frac12 \int_\Om \big( |\nabla u_0|^2 +  |\nabla v_0|^2 \big) \, dx,
\end{equation}
then
\begin{equation}\label{eq:potential zero nonsym}
\lim_{\ve \to 0} \frac{1}{\ve^2}  \int_\Om  \Big[ \big( 2- |u_\ve|^2 - |v_\ve|^2 \big)^2 +    \big( 1- |u_\ve|^2   \big)^2 \Big] dx > 0.
\end{equation}
\end{itemize}
\end{theorem}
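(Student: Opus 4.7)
The plan is to adapt the strategy of Theorem \ref{thm:main1} to the two-component setting, using the Pohozaev bound \eqref{eq:Pohozaev sys nonsym}, the $L^\infty$ estimates of Lemma \ref{lem:L-infty variant}~(ii), and uniqueness of the minimizer $(u_0,v_0)$ of $I_{(g_1,g_2)}$ on $\calY(g_1,g_2)$ (which follows from $d_1=d_2=0$ via a lifting on the simply-connected $\Om$).

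For part (i), hypothesis \eqref{G < C_1-sys-nonsym} provides $H^1\times H^1$ boundedness, so up to subsequences $u_\ve\rightharpoonup u^*$ and $v_\ve\rightharpoonup v^*$ with the prescribed boundary traces. The Pohozaev bound gives $(1-|u_\ve|^2)\to 0$ and $(2-|u_\ve|^2-|v_\ve|^2)\to 0$ in $L^2$, whence also $(1-|v_\ve|^2)\to 0$ in $L^2$. The compact embedding $H^1\hookrightarrow L^4$ in dimension two then yields $|u_\ve|^2\to|u^*|^2$ and $|v_\ve|^2\to|v^*|^2$ in $L^2$, so $|u^*|=|v^*|=1$ a.e.\ and $(u^*,v^*)\in\calY(g_1,g_2)$. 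Weak lower semicontinuity combined with the chain
\[
I_{(g_1,g_2)}(u^*,v^*)\le\liminf_{\ve\to 0}\tfrac12\int_\Om\big(|\nabla u_\ve|^2+|\nabla v_\ve|^2\big)\,dx\le C_3\le\alpha(g_1,g_2)
\]
and the trivial $I_{(g_1,g_2)}(u^*,v^*)\ge\alpha(g_1,g_2)$ forces equality throughout; this gives $C_3=\alpha(g_1,g_2)$ and, by uniqueness of the minimizer, $(u^*,v^*)=(u_0,v_0)$. Applying lsc separately to $u_\ve$ and $v_\ve$ and combining with the convergence of the sum forces each component norm to converge, so weak plus norm convergence in a Hilbert space yields strong $H^1$ convergence; a standard subsequence argument propagates this to the entire family.

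For part (ii), I argue by contradiction, supposing that along a subsequence
\[
\frac{1}{\ve^2}\int_\Om\big[(2-|u_\ve|^2-|v_\ve|^2)^2+(1-|u_\ve|^2)^2\big]\,dx\to 0.
\]
The goal is to force $\lim\tfrac12\int(|\nabla u_\ve|^2+|\nabla v_\ve|^2)\,dx=\alpha(g_1,g_2)$, which contradicts $\ge C_4>\alpha(g_1,g_2)$. The key tool is an energy identity obtained by testing the two equations of \eqref{eq:2-GL nonsym} against $u_\ve-u_0$ and $v_\ve-v_0$ (both vanish on $\pa\Om$). Using the identity $\langle u_0-u_\ve,u_0\rangle=\tfrac12[(1-|u_\ve|^2)+|u_\ve-u_0|^2]$, valid since $|u_0|=1$ (and its analog for $v$), and summing, one obtains
\[
\int_\Om\big(|\nabla u_\ve|^2+|\nabla v_\ve|^2-\nabla u_\ve\cdot\nabla u_0-\nabla v_\ve\cdot\nabla v_0\big)\,dx=-\frac{1}{2\ve^2}\int_\Om V_n\,dx+R_\ve,
\]
with $R_\ve=\frac{1}{2\ve^2}\int_\Om\big[|u_\ve-u_0|^2 W_\ve^{(1)}+|v_\ve-v_0|^2 W_\ve^{(2)}\big]\,dx$, where $W_\ve^{(1)}=(2-|u_\ve|^2-|v_\ve|^2)+(1-|u_\ve|^2)$ and $W_\ve^{(2)}=2-|u_\ve|^2-|v_\ve|^2$.

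The main obstacle is controlling $R_\ve$: naive Cauchy--Schwarz using $\|W_\ve^{(i)}\|_{L^2}=o(\ve)$ (from the vanishing of the potential) produces a divergent factor of order $1/\ve$. The remedy is to combine the pointwise $L^\infty$ bound on $W_\ve^{(i)}$ coming from Lemma \ref{lem:L-infty variant}~(ii) with the two-dimensional Sobolev embedding $H^1\hookrightarrow L^4$ applied to $u_\ve-u_0,v_\ve-v_0\in H^1_0$, together with the absorption of a small multiple of the Dirichlet norms into the leading $-\frac{1}{2\ve^2}\int V_n$ term. Once $R_\ve\to 0$ is established, the identity together with weak convergence and Cauchy--Schwarz yields $\|\nabla u^*\|_{L^2}\le\|\nabla u_0\|_{L^2}$ and its analog for $v^*$; combined with $(u^*,v^*)\in\calY(g_1,g_2)$ and uniqueness of the minimizer, this forces $(u^*,v^*)=(u_0,v_0)$ and the Dirichlet energy of $(u_\ve,v_\ve)$ converges to $2\alpha(g_1,g_2)$, delivering the contradiction.
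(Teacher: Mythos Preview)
Your argument for part~(i) is correct and essentially identical to the paper's: extract a weak limit using the $L^\infty$ bounds of Lemma~\ref{lem:L-infty variant}(ii), use the Pohozaev estimate \eqref{eq:Pohozaev sys nonsym} to place the limit in $\calY(g_1,g_2)$, and close via lower semicontinuity and uniqueness of the minimizer.

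For part~(ii), however, there is a genuine gap in the control of $R_\ve$. Your proposed remedy does not deliver what you need. Lemma~\ref{lem:L-infty variant}(ii) only gives $\|W_\ve^{(i)}\|_{L^\infty}=O(1)$, not $o(1)$, so the $L^\infty$ bound alone is useless against the prefactor $\ve^{-2}$. The Cauchy--Schwarz route you describe yields
\[
\Big|\tfrac{1}{2\ve^2}\int_\Om |u_\ve-u_0|^2 W_\ve^{(1)}\,dx\Big|
\le \tfrac{1}{2\ve^2}\,\|u_\ve-u_0\|_{L^4}^2\,\|W_\ve^{(1)}\|_{L^2}
\le \tfrac{C_S}{2\ve^2}\,\|\nabla(u_\ve-u_0)\|_{L^2}^2\cdot o(\ve),
\]
so the coefficient in front of the Dirichlet norm is $o(1/\ve)$, which diverges; there is nothing small to ``absorb'' into the left-hand side. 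Young's inequality with parameter fares no better: splitting $|u_\ve-u_0|^2 W_\ve^{(1)}\le \tfrac{1}{4\lambda}|u_\ve-u_0|^4+\lambda (W_\ve^{(1)})^2$ produces a term $\tfrac{1}{4\lambda\ve^2}\int|u_\ve-u_0|^4$ that you cannot control, since $|u_\ve-u_0|$ is merely $O(1)$ pointwise. The structural reason your test functions fail is that $u_\ve-u_0$ encodes both modulus \emph{and} phase discrepancy, and only the modulus part is small of order $o(\ve)$ under the contradiction hypothesis; the phase part is $O(1)$. Note also that without first controlling $R_\ve$ you have no a~priori $H^1$ bound (the hypothesis \eqref{G bdd by C1-C2-sys-nonsym} is only a lower bound), so even your weak-limit extraction is not justified.

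The paper takes a different route that sidesteps this obstacle. From the contradiction hypothesis it first invokes \cite[Lemma~2.5]{HHS-nonsym-deg-0} to obtain $|u_\ve|\to 1$ and $|v_\ve|\to 1$ \emph{uniformly} on $\overline\Om$; this permits the polar decompositions $u_\ve=\rho_\ve e^{i\zeta_\ve}$, $v_\ve=\sigma_\ve e^{i\xi_\ve}$. Multiplying the modulus equations by $\rho_\ve-1$ and $\sigma_\ve-1$ produces coupling terms of the form $\tfrac{1}{\ve^2}\int \rho_\ve(\rho_\ve-1)(2-\rho_\ve^2-\sigma_\ve^2)\,dx$, in which \emph{both} factors $(\rho_\ve-1)$ and $(2-\rho_\ve^2-\sigma_\ve^2)$ have $L^2$ norm $o(\ve)$, so Cauchy--Schwarz kills the $\ve^{-2}$ cleanly. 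The remaining phase integrals $\tfrac12\int\rho_\ve|\nabla\zeta_\ve|^2$ are then bounded via the divergence-form equation $\operatorname{div}(\rho_\ve^2\nabla\zeta_\ve)=0$ tested against $\zeta_\ve-\vp$, which yields $\int\rho_\ve^2|\nabla\zeta_\ve|^2\le\int\rho_\ve^2|\nabla\vp|^2$ and hence, after a small-$\delta$ argument, the desired upper bound $C_4\le\alpha(g_1,g_2)$. The separation into modulus and phase is what makes the $\ve^{-2}$ manageable; testing against $u_\ve-u_0$ mixes the two and loses this structure.
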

\bigskip

Next, we deal with solutions of \eqref{eq:2-GL sym}.
In view of Theorem C, we obtain the following theorem.

\smallskip
\begin{theorem}\label{thm:main4}
Let $\Om$ be star-shaped.
Suppose that $d_1=d_2=0$ such that  \eqref{eq:deg zero g1 g2} holds.
Let  $(u_\ve,v_\ve)$ be a solution for \eqref{eq:2-GL sym} and $(u_*,v_*)$ be a   minimizer of $I_{(g_1,g_2)}$ on $\calX(g_1,g_2)$.
\begin {itemize}
\item[{\rm (i)}]
If  there is a constant $C_5$  such that  we have for $\ve$ small enough
\begin{equation}\label{G < C_1-sys-sym}
\frac{1}{2} \int_\Omega \big( |\nabla u_\ve|^2 + |\nabla v_\ve|^2 \big)\, dx \leq C_5 \leq
\frac{1}{2} \int_\Omega \big( |\nabla u_*|^2 + |\nabla v_*|^2 \big)\, dx ,
\end{equation}
then
\begin{equation}\label{eq:C1-1-sys-sym}
C_5 = \int_\Omega \big( |\nabla u_*|^2 + |\nabla v_*|^2 \big)\, dx
\end{equation}
and there exists  $( \tilu ,\tilv)   \in \calX(g_1,g_2)$ such that
\[  (u_\ve,v_\ve)  ~\to ~  ( \tilu ,\tilv) \qin H^1(\Om)\times H^1(\Om).
\]
If $\al(g_1,g_2)=\beta(g_1,g_2)$, then $(\tilu,\tilv)=(u_0,v_0)$.

\item[{\rm (ii)}]
Assume that
\begin{align}
\label{eq:al=beta}
 \al(g_1,g_2)& =\beta(g_1,g_2) ,\\
\label{eq:gamma3}
 \lim_{\ve \to 0} \frac{1}{\ve^2} \int_\Om \big(1-|u_\ve|^2\big)^2dx  &\le \ga_3, \\
\label{eq:gamma4}
 \lim_{\ve \to 0} \frac{1}{\ve^2} \int_\Om \big(1-|v_\ve|^2\big)^2dx &  \le \ga_4
\end{align}
If there is a constant $C_6$ such that  for $\ve$ small enough we have
\begin{equation}
\label{G bdd by C1-C2-sys-sym}
\begin{aligned}
&  \frac12 \int_\Om \big( |\nabla u_\ve|^2 + |\nabla v_\ve|^2 \big)\, dx \\
\geq ~& C_6 ~> ~  \frac12 \int_\Om \big( |\nabla u_0|^2 +  |\nabla v_0|^2 \big) \, dx  + \sqrt{\ga_1\ga_3} + \sqrt{\ga_1\ga_4},
\end{aligned}
 \end{equation}
then
\[ \text{either $|u_\ve|$ or $|v_\ve|$ does not converges uniformly to 1 on $\overline\Om$.}
\]
\end{itemize}
\end{theorem}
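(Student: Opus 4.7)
My plan is to follow the template of Theorem~\ref{thm:main1} but with $\calX(g_1,g_2)$ playing the role of $H^1_g(\Om;S^1)$, and with \eqref{eq:Pohozaev sys sym} together with \eqref{eq:gamma3}--\eqref{eq:gamma4} replacing the pointwise bound $|u_\ve|\le 1$ that is unavailable in the symmetric system.

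\emph{Part (i).} The bound \eqref{G < C_1-sys-sym} makes $(u_\ve,v_\ve)$ bounded in $H^1\times H^1$, so up to a subsequence $u_\ve\rightharpoonup\tilu$ and $v_\ve\rightharpoonup\tilv$ weakly in $H^1$ and strongly in every $L^p$, $p<\infty$. The Pohozaev estimate \eqref{eq:Pohozaev sys sym} forces $|u_\ve|^2+|v_\ve|^2\to 2$ in $L^2$, so $|\tilu|^2+|\tilv|^2=2$ a.e.\ and hence $(\tilu,\tilv)\in\calX(g_1,g_2)$. Lower semicontinuity of the Dirichlet energy and the $\calX$-minimality of $(u_*,v_*)$ sandwich $I_{(g_1,g_2)}(\tilu,\tilv)$ between $\beta(g_1,g_2)$ and $C_5\le\beta(g_1,g_2)$, forcing equality throughout: this yields \eqref{eq:C1-1-sys-sym}, shows $(\tilu,\tilv)$ minimizes $I_{(g_1,g_2)}$ on $\calX$, and upgrades weak to strong $H^1$-convergence via norm convergence. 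When $\al(g_1,g_2)=\beta(g_1,g_2)$, both $(\tilu,\tilv)$ and $(u_0,v_0)$ minimize $I_{(g_1,g_2)}$ on $\calX$, and comparing the Euler--Lagrange equations on $\calX$ with those on $\calY$ identifies $(\tilu,\tilv)=(u_0,v_0)$.

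\emph{Part (ii).} Argue by contradiction: suppose both $|u_\ve|\to 1$ and $|v_\ve|\to 1$ uniformly on $\overline\Om$. Since $d_1=d_2=0$, for small $\ve$ we may lift
\[ u_\ve=\rho_\ve e^{i\phi_\ve},\qquad v_\ve=\sigma_\ve e^{i\psi_\ve},\qquad \rho_\ve,\sigma_\ve\to 1 \text{ uniformly}, \]
so that $(e^{i\phi_\ve},e^{i\psi_\ve})\in\calY(g_1,g_2)$. Writing $u_0=e^{i\phi_0}$, $v_0=e^{i\psi_0}$, the system \eqref{eq:2-GL sym} decouples into modulus equations and phase equations $\operatorname{div}(\rho_\ve^2\nabla\phi_\ve)=0$, $\operatorname{div}(\sigma_\ve^2\nabla\psi_\ve)=0$. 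Testing the first phase equation with $\phi_\ve-\phi_0$ and using $\rho_\ve\to 1$ uniformly yields
\[ \int_\Om\rho_\ve^2|\nabla\phi_\ve|^2\,dx\le \int_\Om\rho_\ve^2|\nabla\phi_0|^2\,dx=\int_\Om|\nabla u_0|^2\,dx+o(1). \]
Testing the first modulus equation with $\rho_\ve-1\in H^1_0(\Om)$, applying Cauchy--Schwarz to the resulting potential integral, and using $|\rho_\ve-1|\le|1-\rho_\ve^2|$ together with \eqref{eq:Pohozaev sys sym} and \eqref{eq:gamma3}, gives $\int|\nabla\rho_\ve|^2\le\sqrt{\ga_1\ga_3}+o(1)$. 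From $|\nabla u_\ve|^2=|\nabla\rho_\ve|^2+\rho_\ve^2|\nabla\phi_\ve|^2$ and the analogous bounds for $v_\ve$ we obtain
\[ \tfrac12\int_\Om(|\nabla u_\ve|^2+|\nabla v_\ve|^2)\,dx\le \tfrac12\int_\Om(|\nabla u_0|^2+|\nabla v_0|^2)\,dx+\tfrac12\bigl(\sqrt{\ga_1\ga_3}+\sqrt{\ga_1\ga_4}\bigr)+o(1), \]
which contradicts \eqref{G bdd by C1-C2-sys-sym} for small $\ve$.

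The main obstacle is the lack of the pointwise bound $|u_\ve|\le 1$ that was pivotal in the proof of Theorem~\ref{thm:main1}(ii): there $\rho_\ve-1\le 0$ gave a favorable sign forcing $\int|\nabla\rho_\ve|^2=o(1)$, whereas here $\rho_\ve-1$ may change sign and the potential contribution must be absorbed through Cauchy--Schwarz. This is precisely where the terms $\sqrt{\ga_1\ga_3}$ and $\sqrt{\ga_1\ga_4}$ enter the threshold in \eqref{G bdd by C1-C2-sys-sym}; the hypothesis $\al(g_1,g_2)=\beta(g_1,g_2)$ is imposed so that $(u_0,v_0)$ serves as the natural reference for the upper bound produced by the lifting argument.
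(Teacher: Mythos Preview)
Your proof is correct and follows essentially the same strategy as the paper. Part~(i) is identical: weak compactness, the Pohozaev bound \eqref{eq:Pohozaev sys sym} to land the limit in $\calX(g_1,g_2)$, then the sandwich argument and norm convergence.

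For part~(ii) the ingredients match the paper's (polar lifting, testing the phase equation $\operatorname{div}(\rho_\ve^2\nabla\phi_\ve)=0$ against $\phi_\ve-\phi_0$, testing the modulus equation against $\rho_\ve-1$, and Cauchy--Schwarz on the potential cross term), but you organize them differently. The paper first uses the identity
\[
\int_\Om|\nabla u_\ve|^2\,dx=\int_\Om\rho_\ve|\nabla\phi_\ve|^2\,dx+D_1
\]
and then runs a $\delta$-argument (borrowed from the proof of Theorem~\ref{thm:main3}\,(ii)) to bound $\frac12\int\rho_\ve|\nabla\phi_\ve|^2$ by $\frac12\int|\nabla u_0|^2+o(1)$. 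You instead bound $\int|\nabla\rho_\ve|^2$ and $\int\rho_\ve^2|\nabla\phi_\ve|^2$ separately, which avoids the $\delta$-argument entirely and is cleaner. A byproduct is that your upper bound carries the factor $\tfrac12(\sqrt{\ga_1\ga_3}+\sqrt{\ga_1\ga_4})$ rather than $\sqrt{\ga_1\ga_3}+\sqrt{\ga_1\ga_4}$; this is sharper than the threshold in \eqref{G bdd by C1-C2-sys-sym} and hence still yields the contradiction. One small point: to conclude that $\int\rho_\ve(1-\rho_\ve)|\nabla\phi_\ve|^2=o(1)$ you implicitly use that $\int|\nabla\phi_\ve|^2$ is bounded, which you have already established via the phase-equation test, so the argument closes.
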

\bigskip

We will prove Theorem \ref{thm:main1} and  \ref{thm:main2} in Section \ref{sec:thm pf main1 first}.
The proofs of Theorem \ref{thm:main3} and \ref{thm:main4} are given in Section \ref{sec:thm pf main3} and Section \ref{sec:thm pf main4}, respectively.

\bigskip
\section{Proof of Theorem \ref{thm:main1} and Theorem \ref{thm:main2}}\label{sec:thm pf main1 first}

Throughout this section, we assume \eqref{eq:deg zero} and prove Theorem \ref{thm:main1} and Theorem \ref{thm:main2}.
Then, we can write
\[ g = e^{i \vp_0} \qwhere \vp_0:  \pa \Om \to \rone.
\]
Moreover, the function $u_0$ is  lifted by a harmonic function $\varphi$   such that
\[ \left\{
\begin{aligned}
\Delta \vp & =0 \qin \Om \qand \vp=\vp_0 \qon \pa \Om,\\
u_0 &= e^{i \varphi} \qand \int_\Omega |\nabla u_0|^2 dx =\int_\Omega |\nabla \vp|^2 dx.
 \end{aligned}
 \right. \]

 \smallskip
{\bf Proof of Theorem \ref{thm:main1} (i): }
Suppose that \eqref{G < C_1}.
Since $\| u_\ve \|_\infty \leq 1$, up to a subsequence, we have
$u_\ve \rightharpoonup \tilde{u}$ in $H_g^1(\Omega)$ for some $\tilu \in H_g^1(\Omega)$.
By \eqref{potential-bded},  $|\tilu | =1$ a.e. on $\Om$ and consequently $ {\tilde{ u}} \in H^1_g(\Om; S^1)$.
Since $u_0$ is a minimizer of $J_g$, we are led to 
\begin{equation}\label{eq:lim-inf}
\begin{aligned}
\frac{1}{2} \int_\Omega |\nabla u_0|^2 dx & \leq \frac{1}{2} \int_\Omega |\nabla \tilde{u}|^2 dx
\leq \liminf_{\ve \to 0}  \frac{1}{2}\int_\Omega |\nabla u_\ve|^2  dx \\
&\leq C_1 \le   \frac{1}{2}\int |\nabla u_0|^2 dx .
\end{aligned}
\end{equation}
Thus, \eqref{eq:C1-1} is true.
Since $u_\ve \to u_0$ weakly in $H^1(\Om)$, we deduce that
\begin{equation}\label{eq:strong conv u to u0}
\begin{aligned}
& \int_\Om |\nabla u_\ve - \nabla u_0|^2 \, dx \\
=~& \int_\Om |\nabla u_\ve |^2 \, dx + \int_\Om |\nabla u_0 |^2 \, dx -2 \int_\Om \nabla u_\ve \cdot \nabla u_0\, dx ~\to ~ 0.
\end{aligned}
\end{equation}
Hence, $u_\ve \to u_0$ in $H_g^1(\Om)$.
Thus, Theorem A holds true by Theorem B.
\qed \\
\smallskip

In the above proof, we prove the following corollary.

\smallskip
\begin{corollary}\label{cor:liminf}
If $u_\ve$ is any solution for  \eqref{eq:GL}$_\ve$, then
\[ \liminf_{\ve \to \infty} \int_\Om |\nabla u_\ve|^2  dx \ge  \int_\Om |\nabla u_0|^2  dx.
\]
\end{corollary}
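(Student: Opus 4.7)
The plan is to observe that Corollary \ref{cor:liminf} is precisely the unconditional content of the chain of inequalities \eqref{eq:lim-inf} in the proof of Theorem \ref{thm:main1} (i): once the hypothesis \eqref{G < C_1} is removed, the outer inequality $\tfrac12\int_\Omega|\nabla u_0|^2\,dx \le C_1$ disappears, but the inner comparison with $\liminf_{\ve\to 0}\tfrac12\int_\Omega|\nabla u_\ve|^2\,dx$ still goes through, and that is exactly what the corollary claims.

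Concretely, I would split into two cases. If $\liminf_{\ve\to 0}\int_\Omega |\nabla u_\ve|^2\,dx = +\infty$, the inequality is trivial. Otherwise, let $(\ve_n)$ be a sequence with $\ve_n\to 0$ realizing the $\liminf$, so that $\int_\Omega|\nabla u_{\ve_n}|^2\,dx$ is uniformly bounded. Combined with the pointwise bound $\|u_{\ve_n}\|_\infty\le 1$ from \eqref{u-veleq-1}, the family $(u_{\ve_n})$ is bounded in $H^1_g(\Omega)$, hence admits a subsequence (not relabeled) weakly convergent in $H^1_g(\Omega)$ to some $\tilde u$. The uniform potential bound \eqref{potential-bded} together with the $L^2$--compactness of $u_{\ve_n}\to \tilde u$ forces $\int_\Omega (1-|\tilde u|^2)^2\,dx=0$, so $|\tilde u|=1$ a.e.\ and $\tilde u\in H^1_g(\Omega;S^1)$.

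With $\tilde u$ admissible for \eqref{eq:u0 ftnal}, the minimality of $u_0$ yields
\[
\tfrac{1}{2}\int_\Omega |\nabla u_0|^2\,dx \;\le\; \tfrac{1}{2}\int_\Omega |\nabla \tilde u|^2\,dx,
\]
and weak lower semicontinuity of the Dirichlet energy gives
\[
\tfrac{1}{2}\int_\Omega |\nabla \tilde u|^2\,dx \;\le\; \liminf_{n\to\infty}\tfrac{1}{2}\int_\Omega |\nabla u_{\ve_n}|^2\,dx \;=\; \liminf_{\ve\to 0}\tfrac{1}{2}\int_\Omega |\nabla u_{\ve}|^2\,dx.
\]
Concatenating these two inequalities and multiplying by $2$ yields the claim.

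There is no real obstacle here, since every ingredient has already been invoked in the proof of Theorem \ref{thm:main1} (i); the only delicate point is making sure the subsequence extraction is handled correctly, which is why I separate out the degenerate case $\liminf=+\infty$ at the outset. In particular, no use is made of the upper bound assumption \eqref{G < C_1}, which is the whole point: the inequality $\liminf_{\ve\to 0}\int_\Omega|\nabla u_\ve|^2\,dx\ge \int_\Omega|\nabla u_0|^2\,dx$ holds universally for any solution family of \eqref{eq:GL}, and it is this lower bound that renders the hypothesis of part (i) of Theorem \ref{thm:main1} a \emph{pinching} condition, thereby forcing the equality \eqref{eq:C1-1} and the $H^1$ convergence.
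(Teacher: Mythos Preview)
Your proof is correct and follows essentially the same approach as the paper: both rely on the first two links of the chain \eqref{eq:lim-inf} (minimality of $u_0$ among $S^1$-valued maps and weak lower semicontinuity of the Dirichlet energy), with \eqref{u-veleq-1} and \eqref{potential-bded} supplying the compactness and the constraint $|\tilde u|=1$. The only cosmetic difference is that the paper argues by contradiction (assuming a strict upper bound $C_1<\tfrac12\int_\Omega|\nabla u_0|^2\,dx$ along a subsequence and then invoking \eqref{eq:lim-inf}), whereas you give the direct argument with an explicit case split on whether the $\liminf$ is finite; the underlying mechanism is identical.
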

\begin{proof}
If we assume the contrary, up to a subsequence, we may assume that
\[ \frac{1}{2} \int_\Omega |\nabla u_\ve|^2 dx \leq C_1 < \frac{1}{2} \int_\Omega |\nabla u_0|^2 dx
\]
for some $C_1$.
Then, we get a contradiction by arguing as in  \eqref{eq:lim-inf}.
\end{proof}
\smallskip

To prove Theorem \ref{thm:main1} (ii), we need two lemmas.

\smallskip
\begin{lemma}\label{lem:unif conv}
Let  $u_\ve$ be a solution for  \eqref{eq:GL}$_\ve$.
If $|u_\ve| \to 1 $ uniformly on $ \overline\Om $, then
\begin{equation}\label{eq:nabla u L2}
 \int_\Om  |\nabla u_\ve|^2  dx \le 2  \int_\Om  |\nabla u_0|^2  dx .
  \end{equation}
\end{lemma}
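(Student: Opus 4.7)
The plan is to exploit the polar decomposition of $u_\ve$. Uniform convergence $|u_\ve|\to 1$ on $\overline\Om$ ensures $u_\ve$ is non-vanishing for $\ve$ small, and the degree-zero assumption on $g$ together with the simple connectedness of $\Om$ provides a smooth single-valued lift $\psi_\ve$ with $u_\ve=\rho_\ve e^{i\psi_\ve}$, $\rho_\ve=|u_\ve|$, and $\psi_\ve=\vp_0$ on $\partial\Om$. Because $|\nabla u_\ve|^2=|\nabla\rho_\ve|^2+\rho_\ve^2|\nabla\psi_\ve|^2$, the claim reduces to bounding each summand by (roughly) $\int_\Om|\nabla u_0|^2\,dx$.

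Separating the real and imaginary parts of \eqref{eq:GL} under the substitution $u_\ve=\rho_\ve e^{i\psi_\ve}$ yields
\[
-\operatorname{div}\bigl(\rho_\ve^2\nabla\psi_\ve\bigr)=0,\qquad -\Delta\rho_\ve+\rho_\ve|\nabla\psi_\ve|^2=\frac{1}{\ve^2}\rho_\ve(1-\rho_\ve^2),
\]
with boundary values $\psi_\ve=\vp_0$, $\rho_\ve=1$ on $\partial\Om$. The first equation characterizes $\psi_\ve$ as the minimizer of $\phi\mapsto\int_\Om\rho_\ve^2|\nabla\phi|^2\,dx$ among $H^1$ functions with trace $\vp_0$. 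Testing against the harmonic extension $\vp$ and using $\rho_\ve\le 1$ from \eqref{u-veleq-1}, I obtain the angular estimate
\[
\int_\Om\rho_\ve^2|\nabla\psi_\ve|^2\,dx\le\int_\Om\rho_\ve^2|\nabla\vp|^2\,dx\le\int_\Om|\nabla\vp|^2\,dx=\int_\Om|\nabla u_0|^2\,dx.
\]

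For the radial piece, I test the equation for $\rho_\ve$ against the admissible variation $\rho_\ve-1$ (which vanishes on $\partial\Om$) and integrate by parts. The potential contribution becomes $-\ve^{-2}\int_\Om\rho_\ve(1-\rho_\ve)^2(1+\rho_\ve)\,dx\le 0$, hence
\[
\int_\Om|\nabla\rho_\ve|^2\,dx\le\int_\Om\rho_\ve(1-\rho_\ve)|\nabla\psi_\ve|^2\,dx\le\|1-\rho_\ve\|_{L^\infty(\Om)}\int_\Om|\nabla\psi_\ve|^2\,dx.
\]
By uniform convergence, for $\ve$ sufficiently small we have $\rho_\ve\ge 1/\sqrt 2$, so $\int_\Om|\nabla\psi_\ve|^2\,dx\le 2\int_\Om\rho_\ve^2|\nabla\psi_\ve|^2\,dx\le 2\int_\Om|\nabla u_0|^2\,dx$, and simultaneously $\|1-\rho_\ve\|_\infty\le 1/2$. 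Adding the two bounds gives
\[
\int_\Om|\nabla u_\ve|^2\,dx\le\bigl(1+2\|1-\rho_\ve\|_\infty\bigr)\int_\Om|\nabla u_0|^2\,dx\le 2\int_\Om|\nabla u_0|^2\,dx,
\]
which is the desired inequality \eqref{eq:nabla u L2}.

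The one delicate point in the above scheme is the construction of the global single-valued phase $\psi_\ve$ with correct boundary trace. However, the uniform convergence hypothesis guarantees that $u_\ve$ is zero-free on $\overline\Om$ for $\ve$ small, and the degree-zero condition on $g$ then allows standard topological lifting in the (star-shaped, hence simply connected) domain; everything else is routine integration by parts combined with the variational characterization of the harmonic extension.
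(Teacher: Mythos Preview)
Your proof is correct and follows essentially the same route as the paper: polar decomposition $u_\ve=\rho_\ve e^{i\psi_\ve}$, testing the $\rho_\ve$-equation against $\rho_\ve-1$, and comparing the phase with the harmonic extension $\vp$ via the $\operatorname{div}(\rho_\ve^2\nabla\psi_\ve)=0$ equation. The only cosmetic difference is bookkeeping: the paper bounds $\int|\nabla u_\ve|^2\le\int\rho_\ve|\nabla\psi_\ve|^2\le 2\int\rho_\ve^2|\nabla\psi_\ve|^2$ in one stroke, whereas you separate the radial and angular contributions---which in fact yields the slightly sharper intermediate estimate $\int|\nabla u_\ve|^2\le(1+2\|1-\rho_\ve\|_\infty)\int|\nabla u_0|^2$.
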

\begin{proof}
 Since  $|u_\varepsilon| \to 1$ uniformly on $ \overline\Omega$, we may assume that
\begin{equation}\label{eq:|u| bigger 1/2}
 \text{$|u_\varepsilon| \geq \frac{1}{2}$ on  $\Omega$   for $\varepsilon > 0$ small enough.}
\end{equation}
  Then, $u_\varepsilon / |u_\varepsilon|$ can be lifted by a smooth function $\zeta_\varepsilon$ such that
\[
\frac{u_\varepsilon}{|u_\varepsilon|} = e^{i \zeta_\varepsilon} \qon  \Omega.
\]
Hence, we can write
\[
u_\varepsilon = \rho_\varepsilon e^{i \zeta_\varepsilon} \qwith   \rho_\varepsilon = |u_\varepsilon|.
\]
Then,  $\zeta_\ve = \vp_0$ on $\pa \Om$ and
\begin{equation}\label{eq:nabla form}
 |\nabla u_\varepsilon|^2 =    |\nabla \rho_\varepsilon|^2 +  \rho_\varepsilon^2 |\nabla \zeta_\varepsilon|^2
\end{equation}
and    the equation \eqref{eq:GL} is transformed into a system of $\rho_\ve$ and $\zeta_\ve$:
\begin{align}\label{eq-rho-zeta-1}
\operatorname{div}\left( \rho_\varepsilon^2 \nabla \zeta_\varepsilon \right) &= 0 \qin \Omega, \\
\label{eq-rho-zeta-2}
-\Delta \rho_\ve + \rho_\ve\big| \nabla  \zeta_\varepsilon  \big|^2 & = \frac{1}{\ve^2}\rho_\ve(1- \rho_\ve^2) \qin \Om.
\end{align}
Multiplying \eqref{eq-rho-zeta-1} by $\rho_\ve-1$, we obtain
\begin{equation}\label{eq-rho-2-1}
\begin{aligned}
& \int_\Omega |\nabla \rho_\varepsilon|^2  dx +
  \int_\Omega\rho_\ve^2 |\nabla  \zeta_\varepsilon |^2  dx -   \int_\Omega \rho_\ve  |\nabla  \zeta_\varepsilon |^2   dx \\
=~& \frac{1}{ \ve^2}\int_\Omega \rho_\ve( \rho_\varepsilon - 1)(1- \rho_\ve^2) dx  ~\leq~ 0.
\end{aligned}
\end{equation}
Hence, it comes from \eqref{eq:|u| bigger 1/2}, \eqref{eq:nabla form} and \eqref{eq-rho-2-1} that
\[ \int_\Om |\nabla u_\varepsilon|^2 dx \le  \int_\Omega \rho_\ve  |\nabla  \zeta_\varepsilon |^2   dx \le  2  \int_\Omega \rho_\ve^2  |\nabla  \zeta_\varepsilon |^2   dx.
\]
On the other hand, multiplying \eqref{eq-rho-zeta-1} by $\zeta_\ve -\vp$, we have
\begin{align*}
 \int_\Om \rho_\ve^2 |\nabla \zeta_\ve|^2 dx & =   \int_\Om \rho_\ve^2   \nabla \zeta_\ve \cdot \nabla \vp \, dx \le \left(  \int_\Om \rho_\ve^2 |\nabla \zeta_\ve|^2 dx \right)^{\frac12}  \left(  \int_\Om \rho_\ve^2 |\nabla \vp |^2 dx \right)^{\frac12}.
 \end{align*}
 In this integration, we used the fact   $u_0=u_\varepsilon = g$ on $\partial \Omega$,  i.e., $\vp=\zeta_\ve=\vp_0$ on $\pa \Om$.
 Hence, we conclude that
 \[ \int_\Om |\nabla u_\varepsilon|^2 dx \le 2   \int_\Om \rho_\ve^2 |\nabla \vp |^2 dx \le 2    \int_\Om  |\nabla \vp |^2 dx .
 \qedhere
 \]
\end{proof}

\smallskip
\begin{lemma}\label{lem:limsup}
Let  $u_\ve$ be a solution for  \eqref{eq:GL}$_\ve$.
If $|u_\ve| \to 1 $ uniformly on $\overline\Om $, then
\begin{equation}\label{eq:limsup}
 \limsup_{\ve \to \infty} \int_\Om |\nabla u_\ve|^2  dx \le  \int_\Om |\nabla u_0|^2  dx.
\end{equation}
\end{lemma}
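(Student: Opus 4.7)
The plan is to exploit the polar decomposition $u_\ve = \rho_\ve e^{i\zeta_\ve}$ introduced in the proof of Lemma \ref{lem:unif conv}, together with the identity $|\nabla u_\ve|^2 = |\nabla \rho_\ve|^2 + \rho_\ve^2|\nabla \zeta_\ve|^2$. The strategy splits the two summands: I will show that the phase contribution has the correct limsup bound by $\int_\Om |\nabla u_0|^2\,dx$, while the modulus contribution vanishes. Since $|u_\ve|\to 1$ uniformly, we have $\rho_\ve\to 1$ uniformly, and may assume $\tfrac12\le \rho_\ve\le 1$ for $\ve$ small.

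For the phase term, testing \eqref{eq-rho-zeta-1} against $\zeta_\ve-\vp$ (which vanishes on $\pa\Om$) gives
\[
\int_\Om \rho_\ve^2|\nabla\zeta_\ve|^2\,dx = \int_\Om \rho_\ve^2\,\nabla\zeta_\ve\cdot\nabla\vp\,dx,
\]
and Cauchy--Schwarz then yields
\[
\int_\Om \rho_\ve^2|\nabla\zeta_\ve|^2\,dx \le \int_\Om \rho_\ve^2|\nabla\vp|^2\,dx.
\]
Since $\rho_\ve\to 1$ uniformly and $\int_\Om|\nabla\vp|^2\,dx=\int_\Om|\nabla u_0|^2\,dx$, the right-hand side converges to $\int_\Om|\nabla u_0|^2\,dx$, so
\[
\limsup_{\ve\to 0}\int_\Om \rho_\ve^2|\nabla\zeta_\ve|^2\,dx \le \int_\Om|\nabla u_0|^2\,dx.
\]

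For the modulus term, I return to \eqref{eq-rho-2-1}. Dropping the nonpositive right-hand side and rearranging,
\[
\int_\Om |\nabla\rho_\ve|^2\,dx \le \int_\Om \rho_\ve(1-\rho_\ve)|\nabla\zeta_\ve|^2\,dx \le \|1-\rho_\ve\|_\infty \int_\Om \rho_\ve|\nabla\zeta_\ve|^2\,dx.
\]
Using $\rho_\ve\ge \tfrac12$, the last integral is controlled by $2\int_\Om \rho_\ve^2|\nabla\zeta_\ve|^2\,dx$, which is bounded by the previous step. Since $\|1-\rho_\ve\|_\infty\to 0$, we conclude $\int_\Om|\nabla\rho_\ve|^2\,dx\to 0$.

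Adding the two contributions via $|\nabla u_\ve|^2 = |\nabla \rho_\ve|^2 + \rho_\ve^2|\nabla \zeta_\ve|^2$ yields \eqref{eq:limsup}. I do not anticipate a major obstacle: the only subtle point is ensuring the pointwise lower bound $\rho_\ve\ge \tfrac12$ so that $\int|\nabla\zeta_\ve|^2$ is controlled by the weighted integral; this is already delivered by the hypothesis of uniform convergence of $|u_\ve|$ to $1$, exactly as in the preceding lemma. Combined with Corollary \ref{cor:liminf}, this shows in fact that $\int_\Om|\nabla u_\ve|^2\,dx \to \int_\Om|\nabla u_0|^2\,dx$.
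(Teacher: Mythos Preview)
Your proof is correct and uses the same analytic ingredients as the paper: the polar decomposition $u_\ve=\rho_\ve e^{i\zeta_\ve}$, the identity obtained by testing \eqref{eq-rho-zeta-1} against $\zeta_\ve-\vp$, and the inequality \eqref{eq-rho-2-1} obtained by testing \eqref{eq-rho-zeta-2} against $\rho_\ve-1$. The organization, however, is genuinely cleaner than the paper's. The paper argues by contradiction: assuming \eqref{eq:limsup} fails, it passes from $\int\rho_\ve|\nabla\zeta_\ve|^2$ to $\int(\rho_\ve^2+\delta)|\nabla\zeta_\ve|^2$ via an auxiliary parameter $\delta$ (using $\rho_\ve<\rho_\ve^2+\delta$ for $\ve$ small), expands, invokes \eqref{eq-rho-psi-1-1} and the bound of Lemma~\ref{lem:unif conv}, and sends $\delta\to 0$. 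Your argument is direct: you split $|\nabla u_\ve|^2=|\nabla\rho_\ve|^2+\rho_\ve^2|\nabla\zeta_\ve|^2$, bound the phase term by $\int\rho_\ve^2|\nabla\vp|^2\to\int|\nabla u_0|^2$ via the weighted Cauchy--Schwarz step, and kill the modulus term using $\|1-\rho_\ve\|_\infty\to 0$ together with the already-established phase bound. This bypasses both the contradiction framing and the $\delta$ device; the paper's route, on the other hand, packages the estimate in a form that is reused verbatim in the proofs of Theorems~\ref{thm:main3}(ii) and~\ref{thm:main4}(ii), where the $\delta$-trick is convenient because the analogue of $\rho_\ve\le 1$ is not available for the systems.
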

\begin{proof}
Let us assume the contrary.
Then,  there exists a constant $C_2>0$ and a subsequence, still denoted by $u_\ve$, such that
\begin{equation}\label{eq:larger than B1}
\int_\Omega |\nabla \vp|^2 dx=\frac12 \int_\Om |\nabla u_0|^2 dx< C_2 \le  \frac12 \int_\Om |\nabla u_\ve|^2 dx.
\end{equation}
 Since  $|u_\varepsilon| \to 1$ uniformly on $\overline\Omega$, we may keep the notations in the proof of Lemma \ref{lem:unif conv}.
Given   $\delta \in (0,\frac14)$, if $\ve$ is small enough, then
\begin{equation}\label{eq:delta chosen}
\frac{1}{2} < \rho_\ve <\rho_\ve^2 +\delta  \qie \frac{1+\sqrt{1-4\delta} }{2} <\rho_\ve <1.
\end{equation}
Let
\[ \psi_\varepsilon = \zeta_\varepsilon - \varphi.
\]
Then, by \eqref{eq:nabla form} and \eqref{eq:larger than B1},
\begin{equation}\label{eq:nabla u formula}
\begin{aligned}
C_2 \le  \frac{1}{2} \int_\Omega |\nabla u_\varepsilon|^2  dx
&=    \frac{1}{2} \int_\Omega |\nabla \rho_\varepsilon|^2  dx
+\frac{1}{2} \int_\Omega \rho_\varepsilon^2 \big|\nabla (\vp +\psi_\ve) \big|^2 dx.
\end{aligned}
\end{equation}
We rewrite \eqref{eq-rho-zeta-1} and \eqref{eq-rho-zeta-2} as 
\begin{align}\label{eq-rho-psi-1}
\operatorname{div}\left( \rho_\varepsilon^2 \nabla( \varphi + \psi_\varepsilon ) \right) &= 0 \qin \Omega, \\
\label{eq-rho-psi-2}
-\Delta \rho_\ve + \rho_\ve\big| \nabla (\varphi + \psi_\varepsilon )\big|^2 & = \frac{1}{\ve^2}\rho_\ve(1- \rho_\ve^2) \qin \Om.
\end{align}
Multiplying \eqref{eq-rho-psi-2} by $ \rho_\varepsilon - 1 $  and  integrating it over $ \Omega $, and using the boundary
condition $ \rho_\varepsilon  =1 $ on $\partial\Omega$, we obtain
\begin{equation}\label{eq-rho-psi-2-1}
\begin{aligned}
&\frac12\int_\Omega |\nabla \rho_\varepsilon|^2  dx +
\frac12 \int_\Omega\rho_\ve^2 | \nabla (\varphi + \psi_\varepsilon )|^2 dx - \frac12 \int_\Omega \rho_\ve | \nabla (\varphi + \psi_\varepsilon )|^2 dx \\
=~& \frac{1}{2\ve^2}\int_\Omega \rho_\ve( \rho_\varepsilon - 1)(1- \rho_\ve^2) dx  ~\leq~ 0.
\end{aligned}
\end{equation}
Then,  from   \eqref{eq:delta chosen},  \eqref{eq:nabla u formula} and \eqref{eq-rho-psi-2-1}, it follows that
\begin{equation}\label{eq-C2-final}
\begin{aligned}
C_2 ~&\le ~   \frac12
\int_\Omega\rho_\ve \big| \nabla (\varphi + \psi_\varepsilon )\big|^2 dx
 ~ \leq~   \frac12   \int_\Omega (\rho_\varepsilon^2+\delta)  \big| \nabla (\varphi + \psi_\varepsilon )\big|^2  dx \\
&  \le ~   \frac12   \int_\Omega  \rho_\varepsilon^2   \big( | \nabla  \varphi |^2 +2 \nabla  \varphi \cdot  \psi_\varepsilon +|\nabla \psi_\varepsilon|^2  \big)\,  dx + \frac12 \delta    \| \nabla \zeta_\ve \|_2^2 \\
&  \le ~  \frac12   \int_\Omega     | \nabla  \varphi |^2 +\frac12   \int_\Omega  \rho_\varepsilon^2   \big(2 \nabla  \varphi \cdot  \psi_\varepsilon +|\nabla \psi_\varepsilon|^2  \big)\,  dx  +   \frac12 \delta    \| \nabla \zeta_\ve \|_2^2 .
\end{aligned}
\end{equation}
Multiplying \eqref{eq-rho-psi-1} by \( \psi_\varepsilon \), integrating it over \( \Omega \), and using the boundary condition \( \psi_\varepsilon = 0 \) on \( \partial\Omega \), we obtain
\begin{align}\label{eq-rho-psi-1-1}
  \int_\Omega \rho_\varepsilon^2 \left| \nabla \psi_\varepsilon \right|^2 \, dx  +\int_\Omega \rho_\varepsilon^2 \nabla \varphi \cdot \nabla \psi_\varepsilon \, dx =0.
\end{align}
Furthermore, by \eqref{eq:nabla u L2} and \eqref{eq:delta chosen},
\begin{equation}\label{eq:nabla zeta}
   \| \nabla \zeta_\ve \|_2^2 \le 4 \int_\Om \rho_\ve^2 | \nabla \zeta_\ve|^2 dx \le 8 \int_\Om |\nabla u_0|^2 dx.
\end{equation}
Hence, by   \eqref{eq-C2-final},  \eqref{eq-rho-psi-1-1} and \eqref{eq:nabla zeta}, we are led to
\begin{align*}
 0< C_2 -   \frac12   \int_\Omega     | \nabla  \varphi |^2 \le   -   \int_\Omega \rho_\varepsilon^2 \left| \nabla \psi_\varepsilon \right|^2 \, dx+  4 \delta  \|\nabla u_0 \|_2^2 \le   4 \delta   \|\nabla u_0|\|_2^2.
\end{align*}
Letting $\delta \to 0$, we arrive at  a contradiction.
\end{proof}

\smallskip
\begin{lemma}\label{lem:pot to zero implies unif conv}
Let  $u_\ve$ be a solution for  \eqref{eq:GL}$_\ve$ that satisfies \eqref{eq:potential to zero}.
Then, $|u_\ve| \to 1 $ uniformly on $\overline\Om $.
\end{lemma}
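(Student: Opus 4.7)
\textbf{Plan for the proof of Lemma \ref{lem:pot to zero implies unif conv}.}

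The strategy is a standard argument by contradiction, combining the assumption \eqref{eq:potential to zero} with a pointwise gradient bound of order $1/\ve$ for solutions of \eqref{eq:GL}. First I would recall (or prove, if needed, by the Bochner-type argument in \cite[Lemma A.1, Lemma A.2]{BBH94}) that any solution $u_\ve$ of \eqref{eq:GL} satisfies a universal estimate
\[
\|\nabla u_\ve\|_{L^\infty(\overline\Om)} \;\le\; \frac{K}{\ve},
\]
where $K$ depends only on $\Om$ and $g$. This relies on $|u_\ve| \le 1$ in $\overline\Om$ (from \eqref{u-veleq-1}), on $|g| = 1$ on $\pa\Om$, and on standard elliptic regularity up to the boundary.

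Second, I would assume for contradiction that $|u_\ve|$ does not converge uniformly to $1$ on $\overline\Om$. Then there exist $\eta \in (0,1)$, a subsequence $\ve_n \to 0$, and points $x_n \in \overline\Om$ such that
\[
|u_{\ve_n}(x_n)| \;\le\; 1 - \eta.
\]
Set $r_n := \dfrac{\eta}{4K}\,\ve_n$. Using the gradient bound, for every $y \in B(x_n, r_n) \cap \overline\Om$,
\[
\bigl| |u_{\ve_n}(y)| - |u_{\ve_n}(x_n)| \bigr| \;\le\; |u_{\ve_n}(y) - u_{\ve_n}(x_n)| \;\le\; \frac{K}{\ve_n}\,r_n \;=\; \frac{\eta}{4},
\]
so $|u_{\ve_n}(y)| \le 1 - \tfrac{3\eta}{4}$ on $B(x_n, r_n) \cap \Om$, which gives
\[
\bigl(1 - |u_{\ve_n}(y)|^2\bigr)^2 \;\ge\; c_1\, \eta^2
\]
for some $c_1 > 0$ depending only on $\eta$.

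Third, since $\Om \subset \rtwo$ has a smooth boundary, there is a uniform constant $c_2 > 0$ (independent of $n$ and $x_n \in \overline\Om$) with
\[
|B(x_n, r_n) \cap \Om| \;\ge\; c_2\, r_n^2 \;=\; c_2 \left(\frac{\eta}{4K}\right)^{\!2}\! \ve_n^2.
\]
Combining the two bounds,
\[
\frac{1}{\ve_n^2}\int_\Om \bigl(1 - |u_{\ve_n}|^2\bigr)^2 dx \;\ge\; \frac{1}{\ve_n^2}\int_{B(x_n,r_n) \cap \Om} \bigl(1 - |u_{\ve_n}|^2\bigr)^2 dx \;\ge\; c_1 c_2 \left(\frac{\eta}{4K}\right)^{\!2}\! \eta^2 \;>\; 0,
\]
which contradicts \eqref{eq:potential to zero}. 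This yields the uniform convergence $|u_\ve| \to 1$ on $\overline\Om$.

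The main obstacle is the gradient estimate $\|\nabla u_\ve\|_\infty \le K/\ve$ \emph{up to the boundary}, which is what allows us to transfer the pointwise defect $|u_\ve(x_n)| \le 1 - \eta$ into a lower bound on the potential integral over a ball of radius $\sim \ve_n$. The interior version is immediate from \cite{BBH94}; the up-to-the-boundary version requires the smoothness of $\pa\Om$ and of $g$, together with $|g|=1$, so that a flattening-of-the-boundary argument and Schauder estimates apply uniformly in $\ve$. Once this ingredient is in place, the rest of the argument is the straightforward ball-packing computation above.
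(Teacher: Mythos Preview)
Your argument is correct and is precisely the standard one from \cite{BBH93} that the paper cites (the paper gives no independent proof, only the reference ``See \cite[Step A.1, B.2]{BBH93}''). The combination of the universal gradient bound $\|\nabla u_\ve\|_{L^\infty}\le K/\ve$ with the ball-of-radius-$c\ve$ lower bound on the potential is exactly Step~A.1/B.2 there, so there is nothing to add.
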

\begin{proof}
See \cite[Step A.1, B.2]{BBH93}.
\end{proof}

\smallskip
\begin{lemma}\label{lem:H1 conv implies unif conv}
Let  $u_\ve$ be a solution for  \eqref{eq:GL}$_\ve$.
If  $u\to u_0$ in $H^1(\Om)$, then $|u_\ve| \to 1 $ uniformly on $\overline\Om $.
\end{lemma}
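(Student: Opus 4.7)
The plan is to combine an energy identity obtained by testing \eqref{eq:GL} against $u_\ve(1-|u_\ve|^2)$ with Theorem~B, and then to conclude via Lemma~\ref{lem:pot to zero implies unif conv} after a short boundary analysis.

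First, Theorem~B applied to the hypothesis $u_\ve\to u_0$ in $H^1(\Om)$ gives $u_\ve\to u_0$ in $C^k_{loc}(\Om)$; in particular $|u_\ve|\to 1$ uniformly on every compact subset of $\Om$, so the set $A_\ve:=\{x\in\Om:|u_\ve(x)|<1/2\}$ is contained in any prescribed neighborhood of $\pa\Om$ for $\ve$ small enough.

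Second, since $|g|\equiv 1$ on $\pa\Om$, the function $\phi_\ve:=u_\ve(1-|u_\ve|^2)$ lies in $H^1_0(\Om;\cpx)$ and is admissible in \eqref{eq:GL}. Using $\re(\bar u_\ve\nabla u_\ve)=\tfrac12\nabla|u_\ve|^2$ and integration by parts one obtains
\[
\int_\Om |\nabla u_\ve|^2(1-|u_\ve|^2)\,dx \;-\; \frac12\int_\Om \big|\nabla|u_\ve|^2\big|^2\,dx \;=\; \frac{1}{\ve^2}\int_\Om |u_\ve|^2 (1-|u_\ve|^2)^2\,dx.
\]
Since $|\nabla u_\ve|^2\to|\nabla u_0|^2$ in $L^1(\Om)$, $|u_0|\equiv 1$, and $\big|\nabla|u_\ve|^2\big|\le 2|\nabla u_\ve|$, a generalized dominated convergence argument forces both terms on the left-hand side to tend to zero; hence $\tfrac{1}{\ve^2}\int_\Om |u_\ve|^2(1-|u_\ve|^2)^2\,dx\to 0$.

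Third, on $\{|u_\ve|\ge 1/2\}$ this immediately yields $\tfrac{1}{\ve^2}\int_{\{|u_\ve|\ge 1/2\}}(1-|u_\ve|^2)^2\,dx\to 0$. To control the $A_\ve$-contribution I would use the Pohozaev bound \eqref{potential-bded}, which already forces $|A_\ve|\le C\ve^2$, together with the localization of $A_\ve$ near $\pa\Om$ from Step~1. A boundary blow-up argument then rules out $A_\ve\neq\emptyset$ for $\ve$ small: if $x_\ve\in A_\ve$ with $d_\ve:=d(x_\ve,\pa\Om)\to 0$, then $|u_\ve|$ must drop from $1$ at the nearest boundary point to at most $1/2$ at $x_\ve$ across the distance $d_\ve$, producing a strictly positive Dirichlet energy defect that contradicts $\int_\Om|\nabla u_\ve|^2\,dx\to\int_\Om|\nabla u_0|^2\,dx$. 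Consequently $\tfrac{1}{\ve^2}\int_\Om(1-|u_\ve|^2)^2\,dx\to 0$, and Lemma~\ref{lem:pot to zero implies unif conv} yields $|u_\ve|\to 1$ uniformly on $\overline\Om$. The main obstacle is this boundary blow-up step: converting a pointwise deficit $|u_\ve(x_\ve)|<1/2$ near $\pa\Om$ into a strictly positive energy excess requires a careful rescaling across the regimes $d_\ve/\ve\to 0$, $\asymp 1$, and $\to\infty$, combined with BBH-type vortex lower energy bounds adapted to the smooth boundary data $g$.
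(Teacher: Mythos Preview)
Your identity in Step~2 is exactly the one the paper derives (by multiplying \eqref{eq:1-u^2 eqn} by $1-|u_\ve|^2$), and both arguments reach the same intermediate conclusions
\[
\frac{1}{\ve^2}\int_\Om |u_\ve|^2(1-|u_\ve|^2)^2\,dx\;\to\;0
\qquad\text{and}\qquad
\int_\Om\big|\nabla(1-|u_\ve|^2)\big|^2\,dx\;\to\;0.
\]
(The paper gets the first limit by splitting over $\Om_\ve^\delta=\{1-|u_\ve|^2>\delta\}$ and using $|\Om_\ve^\delta|\to 0$ together with the $H^1$ convergence, rather than generalized dominated convergence; either route works.)

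The genuine gap is your Step~3. Your boundary blow-up sketch is not a proof: turning a single pointwise deficit $|u_\ve(x_\ve)|<\tfrac12$ with $\dist(x_\ve,\pa\Om)\to 0$ into a \emph{fixed} positive Dirichlet energy excess is delicate, because the bad set $A_\ve$ has measure only $O(\ve^2)$ and lives in a shrinking boundary layer, so neither a trace argument nor interior BBH vortex lower bounds give a uniform defect without substantial additional work across the three regimes you list. Note also that this step is the \emph{only} place you use Theorem~B, so the detour through $C^k_{loc}$ convergence is both unnecessary for Steps~1--2 and insufficient for Step~3.

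The paper closes the argument in one line, using a piece of information you already have but do not exploit, namely $\big\|\nabla(1-|u_\ve|^2)\big\|_{L^2}\to 0$. Writing $|u_\ve|^2(1-|u_\ve|^2)^2=(1-|u_\ve|^2)^2-(1-|u_\ve|^2)^3$ and applying the Gagliardo--Nirenberg inequality $\|w\|_{L^3}^3\le C\|w\|_{L^2}^2\|\nabla w\|_{L^2}$ to $w=1-|u_\ve|^2\in H^1_0(\Om)$ gives
\[
\frac{1}{\ve^2}\int_\Om(1-|u_\ve|^2)^3\,dx
\;\le\; C\Big(\frac{1}{\ve^2}\int_\Om(1-|u_\ve|^2)^2\,dx\Big)\big\|\nabla(1-|u_\ve|^2)\big\|_{L^2}
\;\le\; C\ga_0\cdot o(1)\;\longrightarrow\;0
\]
by \eqref{potential-bded}. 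Hence $\tfrac{1}{\ve^2}\int_\Om(1-|u_\ve|^2)^2\,dx\to 0$, and Lemma~\ref{lem:pot to zero implies unif conv} finishes --- with no appeal to Theorem~B and no blow-up analysis.
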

\begin{proof}
 By multiplying \eqref{eq:1-u^2 eqn} by $1-|u_\ve|^2$, we obtain
 \begin{equation}\label{eq:1-u^2 int}
\begin{aligned}
& 2 \int_\Om |\nabla u_\ve|^2 (1-|u_\ve|^2)\, dx\\
=~& \frac{2}{\ve^2} \int_\Om |u_\ve|^2 \big(1-|u_\ve|^2 \big)^2 dx + \int_\Om \big| \nabla (1-|u_\ve|^2 )\big|^2 dx  .
\end{aligned}
\end{equation}
Given $\delta \in (0,\frac14)$, let
\[ \Om_\ve^\delta = \{ x \in \Om :   1-|u_\ve|^2 >\delta \}.
\]
By \eqref{potential-bded},
\[ \ga_0 \ge \frac{1}{\ve^2} \int_{ \Om_\ve^\delta}  \big( 1- |u_\ve|^2\big)^2 dx \ge  \frac{(1-\delta)^2}{\ve^2}  | \Om_\ve^\delta|.
\]
Hence, for all  $\delta \in (0,\frac14)$, $ | \Om_\ve^\delta| \to 0$   as $\ve\to 0$.
Since $u\to u_0$ in $H^1(\Om)$, it follows that for each fixed  $\delta \in (0,\frac14)$,
\[ \int_{ \Om_\ve^\delta}  |\nabla u_\ve|^2 dx \le 2 \int_{ \Om_\ve^\delta}  |\nabla u_\ve - \nabla u_0 |^2 dx + 2  \int_{ \Om_\ve^\delta}  | \nabla u_0 |^2 dx ~ \to ~ 0
\]
as $\ve \to 0$.
Since $u\to u_0$ in $H^1(\Om)$, we have $\|\nabla u_\ve\|_2^2 \le C$ for some $C$.
Now, we see that  as $\ve \to 0$,
\begin{align*}
  \int_\Om |\nabla u_\ve|^2 (1-|u_\ve|^2)\, dx & \le  \delta  \int_{ \Om \backslash \Om_\ve^\delta} |\nabla u_\ve|^2 dx +  \int_{     \Om_\ve^\delta} |\nabla u_\ve|^2 dx \le C \delta  +o(1).
\end{align*}
So, we deduce from \eqref{eq:1-u^2 int} that for all  $\delta \in (0,\frac14)$,
\[ \limsup_{\ve \to 0} \frac{1}{\ve^2} \int_\Om |u_\ve|^2 \big(1-|u_\ve|^2 \big)^2 dx +\limsup_{\ve \to 0} \int_\Om \big| \nabla (1-|u_\ve|^2 )\big|^2 dx \le C  \delta
\]
Letting $\delta  \to 0$, we obtain that
\begin{equation}\label{eq:(1-u^2)^2 vs (1-u^2)^3}
\begin{aligned}
0&= \lim_{\ve \to 0} \frac{1}{\ve^2} \int_\Om |u_\ve|^2 \big(1-|u_\ve|^2 \big)^2 dx \\
&  = \lim_{\ve \to 0} \frac{1}{\ve^2} \int_\Om  \big(1-|u_\ve|^2 \big)^2 dx - \lim_{\ve \to 0} \frac{1}{\ve^2} \int_\Om   \big(1-|u_\ve|^2 \big)^3 dx.
\end{aligned}
\end{equation}
and
\begin{equation}\label{eq:nabla 1-u^2}
\lim_{\ve \to 0} \int_\Om \big| \nabla (1-|u_\ve|^2 )\big|^2 dx =0.
\end{equation}
By using  \eqref{potential-bded}, \eqref{eq:nabla 1-u^2} and the Gagliardo-Nirenberg inequality
\[ \|u\|_3^3~ \le ~C ~ \|u\|_2^2 ~ \|\nabla u\|_2 \qfor u \in H^1_0(\Om),
\]
we are led to
\begin{align*}
& \frac{1}{\ve^2} \int_\Om   \big(1-|u_\ve|^2 \big)^3 dx \\
\le ~& \frac{C}{\ve^2} \left( \int_\Om   \big(1-|u_\ve|^2 \big)^2 dx \right)  \left( \int_\Om   \big| \nabla (1-|u_\ve|^2 ) \big|^2 dx \right)^{\frac12} \\
\le ~& C \ga_0   \left( \int_\Om  \big| \nabla (1-|u_\ve|^2 ) \big|^2 dx \right)^{\frac12} ~\to ~0.
\end{align*}
In the sequel, we conclude from \eqref{eq:(1-u^2)^2 vs (1-u^2)^3} that
\begin{equation}\label{eq:pot to zero in pf}
\lim_{\ve \to 0} \frac{1}{\ve^2} \int_\Om  \big(1-|u_\ve|^2 \big)^2 dx = \lim_{\ve \to 0} \frac{1}{\ve^2} \int_\Om   \big(1-|u_\ve|^2 \big)^3 dx = 0,
\end{equation}
which implies by Lemma \ref{lem:pot to zero implies unif conv} that   $|u_\ve | \to 1$ uniformly on $\overline\Om$.
This finishes the proof.
\end{proof}

\smallskip
{\bf Proof of Theorem \ref{thm:main1} (ii): }
Let us assume the contrary.
Then,  $|u_\ve| \to 1 $ uniformly on $\overline\Om $.
Hence,   \eqref{eq:limsup} holds by Lemma \ref{lem:limsup} which contradicts \eqref{G bdd by C1-C2}.
\qed  \\
\smallskip

\smallskip
{\bf Proof of Theorem \ref{thm:main2}: }
Suppose that \eqref{eq:potential to zero} holds.
Then, $|u_\ve | \to 1$ uniformly on $\Om$ by Lemma \ref{lem:pot to zero implies unif conv}.
Moreover, by Corollary \ref{cor:liminf} and Lemma \ref{lem:limsup}, we have
\[ \lim_{\ve \to \infty} \int_\Om  |\nabla u_\ve |^2  dx = \int_\Om |\nabla u_0 |^2  dx.
\]
Since $u_\ve \to u_0$ weakly in $H^1(\Om)$, we deduce from \eqref{eq:strong conv u to u0} that   $u_\ve \to u_0$ in $H_g^1(\Om)$.

Conversely, suppose that \eqref{eq:lim nabla u} is true.
Since $|u_\ve| \to 1$ uniformly on $\overline\Om$ by Lemma \ref{lem:H1 conv implies unif conv}, we may assume that $|u_\ve|^2 \ge 1/2$ and use notations in Lemma \ref{lem:unif conv} and Lemma \ref{lem:limsup}.
Multiplying \eqref{eq-rho-psi-2} by $\rho_\ve -1$, we obtain
\begin{align*}
& \int_\Omega |\nabla \rho_\varepsilon|^2  dx +\frac{1}{\ve^2}  \int_\Omega \rho_\ve(1- \rho_\varepsilon  )(1- \rho_\ve^2) \,dx  \\
=~& \int_\Omega (\rho_\ve - \rho_\ve^2) | \nabla (\varphi + \psi_\varepsilon )|^2 dx ~\le ~ \| 1- \rho_\ve \|_\infty \int_\Omega   | \nabla (\varphi + \psi_\varepsilon )|^2 dx ~\to ~ 0.
\end{align*}
Here, we used the fact that $u_\ve \to u_0$ in $H^1(\Om)$ such that $ \| \nabla (\varphi + \psi_\varepsilon )\|_2$ is bounded as $\ve \to 0$.
As a consequence,
\begin{align*}
 0&=\lim_{\ve \to 0} \frac{1}{\ve^2}  \int_\Omega \rho_\ve(1- \rho_\varepsilon  )(1- \rho_\ve^2) \,dx=
\lim_{\ve \to 0} \frac{1}{\ve^2}   \int_\Omega \frac{\rho_\ve}{1+\rho_\ve}  (1- \rho_\ve^2)^2dx \\
&=
\lim_{\ve \to 0} \frac{1}{2\ve^2} \int_\Om (1-\rho_\ve^2)^2 dx = 0
\end{align*}
and the proof is complete.
\qed

 \bigskip
\section{Proof of Theorem \ref{thm:main3}}\label{sec:thm pf main3}

Throughout this section, we assume \eqref{eq:deg zero g1 g2} and prove Theorem \ref{thm:main3}.
We also assume that   $\Om$ is starshaped. 
We can write
\[ g_1 = e^{i \vp_0} \qand g_2= e^{i \psi_0} \qwhere \vp_0, \psi_0:  \pa \Om \to \rone.
\]
The functions $u_0$ and $v_0$ are  lifted by   harmonic functions $\varphi$ and $\psi$ respectively  such that
\begin{equation}\label{eq:phi sys}
\left\{
\begin{aligned}
\Delta \vp & =0 \qin \Om \qand \vp=\vp_0 \qon \pa \Om,\\
u_0 &= e^{i \varphi} \qand \int_\Omega |\nabla u_0|^2 dx =\int_\Omega |\nabla \vp|^2 dx,
 \end{aligned}
 \right.
 \end{equation}
 and
\begin{equation}\label{eq:psi sys}
\left\{
\begin{aligned}
\Delta \psi & =0 \qin \Om \qand \psi=\psi_0 \qon \pa \Om,\\
v_0 &= e^{i \psi} \qand \int_\Omega |\nabla v_0|^2 dx =\int_\Omega |\nabla \psi|^2 dx.
 \end{aligned}
 \right.
 \end{equation}

 \bigskip
{\bf Proof of Theorem \ref{thm:main3} (i): }
Suppose that \eqref{G < C_1-sys-nonsym} is valid.
Since $ \| u_\ve \|_\infty + \| v_\ve \|_\infty \leq 3$ by Lemma \ref{lem:L-infty variant} (ii), up to a subsequence, we have
$(u_\ve,v_\ve ) \rightharpoonup (\tilde{u},\tilv) $ in $H^1 (\Omega) \times H^1(\Omega)$ for some $(\tilu ,\tilv) \in H^1_{g_1}(\Omega) \times  H^1_{g_2}(\Omega)$.
By \eqref{eq:Pohozaev sys nonsym},  $|\tilu | =1$ and  $|\tilv | =1$ a.e. on $\Om$ and consequently $ {\tilde{ u}} \in H^1_{g_1}(\Om; S^1)$ and $ {\tilde{ v}} \in H^1_{g_2} (\Om; S^1)$.
Since $(u_0,v_0)$ is a unique minimizer of $I_{(g_1,g_2)}$ on $\calY(g_1,g_2)$, we are led to 
\begin{align*}
\frac{1}{2} \int_\Omega  \big( |\nabla u_0|^2+|\nabla v_0 |^2 \big) dx & \leq \frac{1}{2} \int_\Omega  \big( |\nabla \tilu|^2+|\nabla \tilv |^2 \big)\,dx \\
& \leq \liminf_{\ve \to 0}  \frac{1}{2}\int_\Omega  \big( |\nabla u_\ve|^2+|\nabla v_\ve |^2 \big) \,dx \\
&\leq C_3 \le
\frac{1}{2} \int_\Omega \big( |\nabla u_0|^2 + |\nabla v_0|^2 \big)\, dx  .
\end{align*} 
Thus, \eqref{eq:C1-1-sys-nonsym}  is true.
Moreover,   $u_\ve \to u_0$ in $H_{g_1}^1(\Om)$ and  $v_\ve \to v_0$ in $H_{g_2}^1(\Om)$ as in the proof of Theorem \ref{thm:main1} (i).
\qed \\
\smallskip

 \smallskip
{\bf Proof of Theorem \ref{thm:main3} (ii): }
Let us assume the contrary so that
\begin{equation}\label{eq:potential zero nonsym pf}
\lim_{\ve \to 0} \frac{1}{\ve^2}  \int_\Om  \Big[ \big( 2- |u_\ve|^2 - |v_\ve|^2 \big)^2 +    \big( 1- |u_\ve|^2   \big)^2 \Big] dx = 0.
\end{equation}
If \eqref{eq:potential zero nonsym pf} is valid, then it follows from \cite[Lemma 2.5]{HHS-nonsym-deg-0} that $|u_\ve|\to 1$ and $|v_\ve|\to 1$ uniformly on $\overline\Om$.
So, we may assume that $|u_\ve|^2 \ge 1/2$ and $|v_\ve|^2 \ge 1/2$ on $\Om$.
We can write
\begin{equation}\label{eq:zeta xi}
 u_\varepsilon = \rho_\varepsilon e^{i \zeta_\varepsilon} \qand
v_\varepsilon = \sigma_\varepsilon e^{i \xi_\varepsilon},
\end{equation}
where $\rho_\ve = |u_\ve|$ and $\sigma_\ve = |v_\ve|$.
Set
\begin{equation}\label{eq:eta chi}
\eta_\varepsilon = \zeta_\varepsilon   -\varphi \qand \chi_\varepsilon = \xi_\varepsilon - \psi.
\end{equation}
Then, \eqref{eq:2-GL nonsym} is written as
\begin{align}
\label{eq:rho-nonsys}
\operatorname{div} \big(\rho_\varepsilon^2 \nabla( \varphi+\eta_\varepsilon )\big) &= 0,  \\
\label{eq:zeta-nonsys}
-\Delta \rho_\varepsilon + \rho_\varepsilon | \nabla \varphi + \nabla \eta_\varepsilon |^2
&= \frac{1}{\varepsilon^2}\rho_\varepsilon \left( 2 -  \rho_\varepsilon^2  - \sigma_\varepsilon^2\right) + \frac{1}{\varepsilon^2}\rho_\varepsilon \left( 1 -   \rho_\varepsilon^2 \right),\\
\label{eq:sigma-nonsys}
\operatorname{div}\big(\sigma_\varepsilon^2 \nabla( \psi+\chi_\varepsilon )\big) &= 0, \\
\label{eq:xi-nonsys}
-\Delta \sigma_\varepsilon + \sigma_\varepsilon |  \nabla \psi+\chi_\varepsilon |^2
&= \frac{1}{\varepsilon^2}\sigma_\varepsilon \left( 2 -  \rho_\varepsilon^2  - \sigma_\varepsilon^2\right) .
\end{align}
By multiplying \eqref{eq:zeta-nonsys} by $\rho_\ve-1$ and \eqref{eq:xi-nonsys} by $\sigma_\ve-1$, we obtain from \eqref{G bdd by C1-C2-sys-nonsym}
\begin{align*}
C_4 ~ \le ~& \frac12 \int_\Om \big( |\nabla u_\ve|^2 + |\nabla v_\ve|^2 \big) \, dx \\
=~ &
\frac{1}{2}\int_\Omega
\Big(
|\nabla \rho_\varepsilon|^2 + |\nabla \sigma_\varepsilon|^2
+ \rho_\varepsilon^2 | \nabla \varphi+\nabla \eta_\varepsilon |^2
+ \sigma_\varepsilon^2 | \nabla \psi+\nabla \chi_\varepsilon |^2
\Big)  \\
= ~& \frac12 \int_\Omega  \Big( \rho_\varepsilon  | \nabla \varphi+\nabla \eta_\varepsilon|^2
+ \sigma_\varepsilon  | \nabla \psi+\nabla \chi_\varepsilon |^2\Big) \, dx + D_1+D_2+D_3,
\end{align*}
 where
 \begin{equation}\label{eq:D123}
 \left\{
 \begin{aligned}
 D_1 & =  \frac{1}{\varepsilon^2} \int \rho_\varepsilon (\rho_\varepsilon - 1)(2 - \rho_\varepsilon^2 -  \sigma_\varepsilon^2),\\
 D_2 & =  \frac{1}{\varepsilon^2} \int \sigma_\varepsilon (\sigma_\varepsilon - 1)(2 - \rho_\varepsilon^2 -   \sigma_\varepsilon^2),\\
  D_3& =  \frac{1}{\varepsilon^2} \int \rho_\varepsilon (\rho_\varepsilon - 1)(1 - \rho_\varepsilon^2).
 \end{aligned}\right.
 \end{equation}
 Then, $D_j\to 0$ for each $j=1,2,3$ as $\ve \to0$.
 Indeed,  by H\"{o}lder's inequality and the condition \eqref{eq:potential zero nonsym pf}, we can show that $D_1 \to 0$ and $D_3 \to 0$ as $\ve \to 0$.
 Moreover, as $\ve \to 0$, we have
\begin{align*}
o(1)&= \frac{1}{\varepsilon^2}  \int_\Om (2 - \rho_\varepsilon^2 -  \sigma_\varepsilon^2)^2 dx\\
&= \frac{1}{\varepsilon^2}\int_\Om  (1 - \rho_\varepsilon^2)^2 dx +  \frac{2}{\varepsilon^2}\int_\Om  (1 - \rho_\varepsilon^2)(1 - \sigma_\varepsilon^2) dx +  \frac{1}{\varepsilon^2}\int_\Om  (1 - \sigma_\varepsilon^2)^2 dx \\
& =o(1) +  \frac{2}{\varepsilon^2}\int_\Om  (1 - \rho_\varepsilon^2)(1 - \sigma_\varepsilon^2) dx +  \frac{1}{\varepsilon^2}\int_\Om  (1 - \sigma_\varepsilon^2)^2 dx.
\end{align*}
Hence, by H\"{o}lder's inequality, we obtain
\[  \frac{1}{\varepsilon^2}\int_\Om  (1 - \sigma_\varepsilon^2)^2 dx \le o(1)+
2 \left[ \frac{1}{\varepsilon^2}\int_\Om  (1 - \rho_\varepsilon^2)^2 dx \right]^{\frac12}
 \left[ \frac{1}{\varepsilon^2}\int_\Om  (1 - \sigma_\varepsilon^2)^2 dx \right]^{\frac12}.
\]
Thus, $\|1-\sigma_\ve^2 \|_2\to 0$ and  then H\"{o}lder's inequality implies that $D_2 \to 0$.

We have shown that as $\ve \to 0$,
\begin{equation}\label{eq:A1 A2 nonsym}
\begin{aligned}
C_4 ~&\le ~  o(1) + \frac{1}{2}\int_\Omega  \rho_\varepsilon  | \nabla \varphi+\nabla \eta_\varepsilon|^2 dx + \frac{1}{2}\int_\Omega
 \sigma_\varepsilon  |\nabla \psi+\nabla \chi_\varepsilon |^2 dx \\
&=:~ o(1)+A_1+A_2.
\end{aligned}
\end{equation}
Let     $\delta \in (0,\frac14)$ be given and we may assume \eqref{eq:delta chosen}.
So, we have
\begin{align*}
 A_1&  \le \frac{1}{2}\int_\Omega  \rho_\varepsilon^2   | \nabla \varphi+\nabla \eta_\varepsilon|^2 dx +  \frac{\delta}{2}\int_\Omega     | \nabla \varphi+\nabla \eta_\varepsilon|^2 dx\\
 &=  \frac{1}{2}\int_\Omega  \rho_\varepsilon^2   |\nabla \vp|^2 dx +   \frac{1}{2}\int_\Omega  \rho_\varepsilon^2  \big( 2  \nabla \vp \cdot \nabla \eta_\varepsilon +    |\nabla \eta_\varepsilon |^2 \big) \,dx +  \frac{\delta}{2}\int_\Omega     |\nabla \vp +\nabla \eta_\ve|^2 dx.
\end{align*}
By multiplying \eqref{eq:rho-nonsys} by $\psi_\ve$, we obtain
\begin{equation}\label{eq:vp psi sys nonsym}
\int_\Om \rho_\ve^2 |\nabla \eta_\ve|^2 dx+  \int_\Om \rho_\ve^2 \nabla  \vp \cdot \nabla \eta_\ve \,dx=0.
\end{equation}
So,
\begin{equation}\label{eq:A1 ineq 3}
A_1    \le  \frac{1}{2}\int_\Omega  \rho_\varepsilon^2   |\nabla \vp|^2 dx -   \frac{1}{2}\int_\Omega  \rho_\varepsilon^2 |\nabla \eta_\ve|^2dx   +  \frac{\delta}{2}\int_\Omega     | \nabla \vp +\nabla \eta_\varepsilon|^2 dx.
\end{equation}
On the other hand, \eqref{eq:vp psi sys nonsym} implies that
\begin{align*}
\int_\Om \rho_\ve^2 |\nabla \vp +\nabla \eta_\ve|^2 dx &= \int_\Om \rho_\ve^2 \big(\nabla \vp +\nabla \eta_\ve \big) \cdot \nabla \vp  \, dx \\
&\le  \left( \int_\Omega  \rho_\varepsilon^2  |\nabla \vp+\nabla \eta_\ve|^2dx \right)^{\frac12}  \left( \int_\Omega  \rho_\varepsilon^2  |\nabla \vp|^2dx \right)^{\frac12}.
\end{align*}
Since $\rho_\ve^2\ge 1/2$, this inequality implies that
\[ \frac12  \int_\Om |\nabla \vp +\nabla \eta_\ve|^2 dx \le \int_\Om \rho_\ve^2 |\nabla \vp +\nabla \eta_\ve|^2 dx \le \int_\Omega  \rho_\varepsilon^2  |\nabla \vp|^2dx.
\]
Hence, we can rewrite \eqref{eq:A1 ineq 3} as
\begin{align*}
 A_1 \le  \frac{1}{2}\int_\Omega  \rho_\varepsilon^2   |\nabla \vp|^2 dx  +    \delta \int_\Omega  \rho_\varepsilon^2    |\nabla \vp|^2 dx.
\end{align*}
By a similar argument, we also obtain
\begin{align*}
 A_2 \le  \frac{1}{2}\int_\Omega  \sigma_\varepsilon^2   |\nabla \psi|^2 dx  +   \delta  \int_\Omega     \sigma_\varepsilon^2 |\nabla \psi|^2 dx.
\end{align*}
In the sequel, we deduce from   \eqref{eq:A1 A2 nonsym} that
\[  C_4  \le o(1) +  \frac{1}{2}\int_\Omega  \big( \rho_\varepsilon^2   |\nabla \vp|^2+\sigma_\ve^2  |\nabla \psi|^2 \big) \, dx +  \delta \int_\Omega  \big( \rho_\ve^2  |\nabla \vp|^2 + \sigma_\ve^2 |\nabla \psi|^2 \big) \,dx.
\]
Letting $\ve \to 0$, we are led to
\[ C_4 \le \frac{1}{2}\int_\Omega  \big(    |\nabla \vp|^2+   |\nabla \psi|^2 \big) \, dx + \frac{ \delta}{2}\int_\Omega  \big(    |\nabla \vp|^2 +   |\nabla \psi|^2 \big) \,dx
\]
Finally, by taking the limit $\delta \to 0$, we get a contradiction from   the assumption \eqref{G bdd by C1-C2-sys-nonsym}.
\qed
\smallskip

 \bigskip
\section{Proof of Theorem \ref{thm:main4}}\label{sec:thm pf main4}

This section is devoted to the proof of  Theorem \ref{thm:main4}.
Throughout this section, we assume that \eqref{eq:deg zero g1 g2} holds and $\Om$ is starshaped.\\

 \smallskip
{\bf Proof of Theorem \ref{thm:main4} (i):}
Suppose that \eqref{G < C_1-sys-nonsym} is valid.
Since $ \| u_\ve \|_\infty + \| v_\ve \|_\infty \leq 2$ by Lemma \ref{lem:L-infty variant} (i), up to a subsequence, we have
$(u_\ve,v_\ve ) \rightharpoonup (\tilde{u},\tilv) $ in $H^1 (\Omega) \times H^1(\Omega)$ for some $(\tilu ,\tilv) \in H^1_{g_1}(\Omega) \times  H^1_{g_2}(\Omega)$.
By \eqref{eq:Pohozaev sys sym},  $|\tilu |^2+|\tilv |^2 =2$ a.e. on $\Om$ and thus $ (\tilu,\tilv) \in \calX(g_1,g_2)$.
Since $(u_*,v_*)$ is a   minimizer of $I_{(g_1,g_2)}$ on $\calX(g_1,g_2)$, we are led to 
\begin{align*}
\frac{1}{2} \int_\Omega  \big( |\nabla u_*|^2+|\nabla v_* |^2 \big) dx & \leq \frac{1}{2} \int_\Omega  \big( |\nabla \tilu|^2+|\nabla \tilv |^2 \big)\,dx \\
& \leq \liminf_{\ve \to 0}  \frac{1}{2}\int_\Omega  \big( |\nabla u_\ve|^2+|\nabla v_\ve |^2 \big) \,dx \\
&\leq C_5 \le
\frac{1}{2} \int_\Omega \big( |\nabla u_*|^2 + |\nabla v_*|^2 \big)\, dx  .
\end{align*}
Thus, \eqref{eq:C1-1-sys-sym}  is obtained.
As in the proof of Theorem \ref{thm:main1} (i), it also holds that
   $u_\ve \to \tilu$ in $H_{g_1}^1(\Om)$ and  $v_\ve \to \tilv$ in $H_{g_2}^1(\Om)$.
Furthermore, if $\al(g_1,g_2)=\beta(g_1,g_2)$, then it is easy to see that $(u_*,v_*)=(\tilu,\tilv)=(u_0,v_0)$.
This completes the proof.
\qed
\smallskip

\smallskip
\begin{remark}\label{rmk:tilu vs u*}
We do not know the uniqueness of solution to the problem \eqref{eq:min prob beta}.
If this problem has a unique solution, then we obtain $(u_*,v_*)=(\tilu,\tilv)$ in the proof of Theorem \ref{thm:main4} (i).
\end{remark}
\smallskip

 \smallskip
{\bf Proof of Theorem \ref{thm:main4} (ii): }
Let us assume the contrary so that $|u_\ve|\to 1$ and $|v_\ve|\to 1$ uniformly on $\overline\Om$.
Then, $|u_*|=1$ and $|v_*|=1$.
Since $\al(g_1,g_2)=\beta (g_1,g_2)$ by \eqref{eq:al=beta}, it follows that $(u_*,v_*)=(u_0,v_0)$.
So, we can use the notations \eqref{eq:phi sys} and \eqref{eq:psi sys}.
Moreover,   we may assume that $|u_\ve|^2 \ge 1/2$ and $|v_\ve|^2 \ge 1/2$ on $\Om$, and take the notations \eqref{eq:zeta xi} and \eqref{eq:eta chi}.
We can rewrite  \eqref{eq:2-GL sym}as
\begin{align}
\label{eq:rho-sys}
\operatorname{div} \big(\rho_\varepsilon^2 \nabla( \varphi+\eta_\varepsilon )\big) &= 0,  \\
\label{eq:zeta-sys}
-\Delta \rho_\varepsilon + \rho_\varepsilon | \nabla \varphi + \nabla \eta_\varepsilon |^2
&= \frac{1}{\varepsilon^2}\rho_\varepsilon \left( 2 -  \rho_\varepsilon^2  - \sigma_\varepsilon^2\right)  ,\\
\label{eq:sigma-sys}
\operatorname{div}\big(\sigma_\varepsilon^2 \nabla( \psi+\chi_\varepsilon )\big) &= 0, \\
\label{eq:xi-sys}
-\Delta \sigma_\varepsilon + \sigma_\varepsilon |  \nabla \psi+\chi_\varepsilon |^2
&= \frac{1}{\varepsilon^2}\sigma_\varepsilon \left( 2 -  \rho_\varepsilon^2  - \sigma_\varepsilon^2\right) .
\end{align}
By proceeding as in the proof of Theorem \ref{thm:main3} (ii), we obtain
\begin{align*}
C_6 ~ \le ~&   \frac12 \int_\Omega  \Big( \rho_\varepsilon  | \nabla \varphi+\nabla \eta_\varepsilon|^2
+ \sigma_\varepsilon  | \nabla \psi+\nabla \chi_\varepsilon |^2\Big) \, dx + D_1+D_2 ,
\end{align*}
 where $D_1$ and $D_2$ are defined by \eqref{eq:D123}.
By \eqref{eq:Pohozaev sys sym}, \eqref{eq:gamma3}, \eqref{eq:gamma4} and H\"{o}lder's inequality, we obtain
\begin{align*}
 D_1 & =  \frac{1}{\varepsilon^2} \int_\Om  \frac{\rho_\ve}{\rho_\ve+1} (\rho_\varepsilon^2 - 1)(2 - \rho_\varepsilon^2 -  \sigma_\varepsilon^2) \le \sqrt{\ga_1\ga_3},\\
 D_2 & =  \frac{1}{\varepsilon^2} \int_\Om  \frac{\sigma_\ve}{\sigma_\ve+1} (\sigma_\varepsilon^2 - 1)(2 - \rho_\varepsilon^2 -  \sigma_\varepsilon^2) \le \sqrt{\ga_1\ga_4}.
\end{align*}
So,
\[ C_6 ~ \le ~   \frac{1}{2}\int_\Omega  \rho_\varepsilon  | \nabla \varphi+\nabla \eta_\varepsilon|^2 dx + \frac{1}{2}\int_\Omega
 \sigma_\varepsilon  |\nabla \psi+\nabla \chi_\varepsilon |^2 dx + \sqrt{\ga_1\ga_3} + \sqrt{\ga_1\ga_4}.
 \]
 Furthermore, by arguing as in the proof of Theorem \ref{thm:main3} (ii), we are led to
\[ C_6 \le \frac{1}{2}\int_\Omega  \big(    |\nabla \vp|^2+   |\nabla \psi|^2 \big) \, dx  + \sqrt{\ga_1\ga_3} + \sqrt{\ga_1\ga_4},
\]
we  contradicts   the assumption \eqref{G bdd by C1-C2-sys-sym}.
\qed
\smallskip

\bigskip
 \subsubsection*{\bf Acknowledgements.}
 Jongmin Han  was supported by the National Research Foundation of Korea(NRF) grant funded by the Korea government(MSIT) (RS-2024-00357675).

\small
 \bibliographystyle{amsplain}

\end{document}